\documentclass[11pt,letterpaper,upref]{amsart}
\usepackage[letterpaper,margin=1.5in]{geometry}
\usepackage{amssymb}
\usepackage{amsxtra}
\usepackage[mathscr]{eucal}
\usepackage{graphics}
\usepackage{mathtools}
\usepackage[nobysame]{amsrefs}
\usepackage{color}
\usepackage{enumitem} 

\numberwithin{equation}{section}

\newtheorem{theorem}{Theorem}[section]
\newtheorem{proposition}[theorem]{Proposition}
\newtheorem{lemma}[theorem]{Lemma}

\newtheorem{conjecture}[theorem]{Conjecture}

\theoremstyle{definition}

\newcommand{\af}{\alpha_f}

\newcommand{\al}{\alpha}

\newcommand{\aut}{\operatorname{aut}}

\newcommand{\csp}{\hspace{0.05em}}

\newcommand{\Df}{\Delta_{f}}
\newcommand{\Del}{\Delta}
\newcommand{\del}{\delta}
\newcommand{\xxx}{E_{T\!,\,\om}}

\newcommand{\lifr}{\ell^\infty(\FF,\RR)}
\newcommand{\lifz}{\ell^\infty(\FF,\ZZ)}

\newcommand{\FF}{\mathbb{F}}
\newcommand{\fhat}{\widehat{f}}

\newcommand{\G}{\Gamma}

\newcommand{\F}{\mathbb{F}}
\newcommand{\fs}{f^*}

\newcommand{\mf}{\mu_f}
\newcommand{\muhat}{\widehat{\mu}}
\newcommand{\nuhat}{\widehat{\nu}}
\newcommand{\om}{\omega}

\newcommand{\phs}{\phi_*}
\newcommand{\pt}{\partial T}

\newcommand{\RR}{\mathbb R}
\newcommand{\sig}{\sigma}
\newcommand{\Sig}{\Sigma}
 
\newcommand{\tb}{\overline{T}}
\newcommand{\TT}{\mathbb{T}}
\newcommand{\tf}{\TT^{\FF}}
\newcommand{\wdel}{w^{\Delta}}
\newcommand{\xdel}{x^{\Delta}}
\newcommand{\xf}{X_f}
\newcommand{\ZG}{\ZZ\G}
\newcommand{\ZZ}{\mathbb{Z}} 
\newcommand{\zd}{\ZZ^{d}}

\renewcommand{\ge}{\geqslant}
\renewcommand{\le}{\leqslant}
\newcommand{\<}{\langle}
\renewcommand{\>}{\rangle}  
\renewcommand{\emptyset}{\varnothing}

\begin{document}

\title{New Examples of Bernoulli Algebraic Actions}

\author{Douglas Lind}

\address{Douglas Lind: Department of Mathematics, University of
  Washington, Seattle, Washington 98195, USA}
  \email{lind@math.washington.edu}

\author{Klaus Schmidt}

\address{Klaus Schmidt: Mathematics Institute, University of Vienna, 
Nordberg\-stra{\ss}e 15, A-1090 Vienna, Austria}
\email{klaus.schmidt@univie.ac.at}





\begin{abstract}
   We give an example of a principal algebraic action of the noncommutative free group $\FF$ of rank two by automorphisms of a connected compact abelian group for which there is an explicit measurable isomorphism with the full Bernoulli 3-shift action of $\FF$. The isomorphism is defined using homoclinic points, a method that has been used earlier to construct symbolic covers of algebraic actions. To our knowledge, this is the first example of a Bernoulli algebraic action of $\FF$ without an obvious independent generator. Our methods can be generalized to a large class of acting groups. 
\end{abstract}

\maketitle

\section{Introduction}\label{sec:introduction} Halmos \cite{Halmos} first observed that an continuous automorphism of a compact group automatically preserves Haar measure, providing a rich class of examples in ergodic theory. Using Pontryagin duality theory, it is possible to obtain explicit and concrete answers to dynamical questions. In particular, a series of papers in the 1970s culminated in the definitive result that every ergodic automorphism of a compact abelian group is measurably isomorphic to a Bernoulli shift \cites{LindStructure, Miles-Thomas}.

The study of the joint action of several commuting automorphisms of a compact abelian group was initiated by Bruce Kitchens and the second author \cite{KitSch}. This has ultimately led to a detailed understanding of such actions, called algebraic $\zd$-actions, as described in \cite{DSAO}. Here there is a natural necessary condition for such actions to be measurably isomorphic to Bernoulli shifts, namely having completely positive entropy, and this condition can be checked using commutative algebra \cite{LSW}*{Thm.\ 6.5}. The second author and Dan Rudolph showed in \cite{RudolphSchmidt} that this condition is also sufficient. 

For acting groups that are not commutative, much less is known.  See for example our recent survey \cite{LS-Heis} of algebraic actions of the discrete Heisenberg group. Even for this group we do not know a general method to decide whether or not a given algebraic action is measurably isomorphic to a Bernoulli action.

The study of actions of general countable groups, even beyond amenable groups, has been revolutionized by Lewis Bowen's introduction of new ideas about entropy and independence. The recent book of Kerr and Li \cite{KerrLi} gives a comprehensive account of these developments, in particular of how entropy can be defined for actions of sofic groups. Algebraic actions supply a large class of interesting examples for this theory. The study of entropy for algebraic actions of noncommutative groups was initiated by Christopher Deninger \cite{Deninger}, who showed that entropy could be computed for many amenable groups using the Fuglede-Kadison determinant of an associated operator in a von Neumann algebra. This insight was developed in a series of papers by several authors, leading to a definitive form for algebraic actions of general sofic groups by Hayes \cite{Hayes}.

However, little is known about when algebraic actions of sofic groups are measurably isomorphic to Bernoulli actions. The reason for this ignorance is that many of the essential results for the Bernoulli theory of amenable group actions due to Ornstein and Weiss \cite{Ornstein-Weiss} fail for sofic groups. For instance, factors of Bernoulli actions may fail to be Bernoulli. A striking example of this is due to Popa \cites{Popa, Popa-Sasyk}: the algebraic action of a countable group $\G$ having property (T) on the quotient of ~$\TT^\G$ by the subgroup of constant points is not Bernoulli (see \cite{Bowen}*{Thm.\ 7.2} for a succinct explanation).

In this paper we construct an explicit measurable isomorphism between an algebraic action of the (noncommutative) free group $\FF$ of rank two on a connected compact abelian group and the full 3-shift action of $\FF$ which preserves the respective measures. We believe that this is the first nontrivial example of this sort, where there is no obvious independent generator.  Our proof uses symbolic covers, homoclinic points, and a percolation argument from \cite{EinsiedlerSchmidt}*{Prop.\ 5.1}. That argument relies on the fact that for expansive algebraic $\ZZ^d$-actions Haar measure is the unique measure of maximal entropy. However it an open question whether this remains true for ~$\FF$. Here we give an alternative argument, showing that the image of the 3-shift measure is invariant under translations by all elements in the dense homoclinic group,  and hence it must be Haar measure.

Our methods can be generalized to the class of so-called indicable groups, namely those groups for which there is a surjective homomorphism to $\ZZ$. In this setting, recent work of Hayes \cite{HayesHarmonic} provides a systematic way for proving an image measure is Haar using Fourier coefficients. We use this alternative to the homoclinic group argument while extending our results to indicable groups and other algebraic actions. By a result of David Kerr \cite{KerrCompletelyPositive}, these algebraic actions have completely positive entropy.

\section{Algebraic actions}\label{sec:algebraic-actions}

Let $\G$ be a countable discrete group with identity element $1_\G$.  An \emph{algebraic $\G$-action} on a compact abelian group $X$ is a homomorphism $\al\colon\G\to\aut(X)$ from $\G$ to the group of (continuous) algebraic automorphisms of $X$.  We denote the image of $s\in\G$ under $\al$ by $\al^{s}$, so that $\al^{st}=\al^{s}\circ\al^{t}$ and $\al^{1_\G}=\text{Id}_{X}$. 

Let $\ZZ\G$ denote the integral group ring of $\G$, consisting of all sums of the form $g=\sum_{s\in\G}g_s s$, where $g_s\in\ZZ$ for every $s\in\G$ and only finitely many $g_s$ are nonzero. The (additive) Pontryagin dual of $\ZG$ is $\TT^{\G}$, where $\TT=\RR/\ZZ$ and the dual pairing is given by $\<x,g\>=\sum_{s\in\G} x_sg_s$ for  $x\in\TT^{\G}$ and $g\in\ZG$. Left multiplication by $\G$ on $\ZG$ defines a $\G$-action that dualizes to the algebraic $\G$-action $\sigma$ on $\TT^{\G}$ given by $\sigma^s(x)_t^{}=x_{s^{-1}t}$.

Fix an $f\in\ZZ\G$, and let $\ZZ\G f$ denote the left principal ideal in $\ZZ\G$ generated by ~$f$. The compact dual group of $\ZZ\G/\ZZ\G f$ is then a subgroup of $\TT^{\G}$ denoted by $X_f$, and the restriction of $\sigma$ to $X_f$ is an algebraic $\G$-action denoted by $\af$. We call $(X_f,\af)$ the \emph{principal algebraic $\G$-action defined by $f$}. This action automatically preserves Haar measure $\mu_f$ on $X_f$.

A convenient concrete description of principal actions uses formal sums. Identify $x\in\TT^\G$ with the sum $\sum_{t\in\G}x_t\csp t$, where $x_t\in\TT$ for every $t\in\G$. Then $\G$ acts on $\TT^\G$ by left multiplication.  Explicitly,
\begin{displaymath}
   \sigma^s(x) = s\cdot x = s\cdot \sum_{t\in\G}x_t\csp t = \sum_{t\in\G} x_t\csp st=\sum_{t\in\G} x_{s^{-1}t}\csp t,
\end{displaymath}
so that $\sigma^s(x)_t=x_{s^{-1}t}$. Similarly, if $g\in\ZZ\G$ we can formally multiply $x$ by $g$ on the right by expanding out and collecting terms, with the result denoted by $x\cdot g$. We can express the dual pairing by $\<x,g\>=(x\cdot g^*)_{1_\G}$. Let $f^* = \sum_{s\in\G} f_s\csp s^{-1}$. Then $X_f$ is the subgroup of $\TT^\G$ consisting of all $x$ for which $x\cdot f^*=0$. Thus $x\in X_f$ if and only if $\sum_{s\in\G}f_s \csp x_{ts}=0$ for every $t\in\G$, a finite integral condition on the coordinates of $x$. 

We will use a similar convention for other spaces as well, for instance $\ell^\infty(\G,\RR)$ and $\ell^\infty(\G,\ZZ)$.

\section{The homoclinic map}\label{sec:homoclinic-map}

Our focus will be on algebraic actions of the free group $\FF$ of rank two generated by $a$ and $b$. We let $S=\{a,b,a^{-1},b^{-1}\}$ be the standard generating set, and use $S$ to define the word metric $|\cdot|_S$ on $\FF$. Our main example is the principal algebraic $\FF$-action defined by $f = 3 -a-b\in\ZZ\FF$.

First observe that $f^*=3-a^{-1}-b^{-1}$ is invertible in $\ell^1(\FF,\RR)$ by using geometric series. Specifically, if $N$ denotes the set of all words in $a^{-1}$ and $b^{-1}$ (including $1_\FF$), then $(f^*)^{-1}=(1/3)\sum_{s\in N} 3^{-|s|}\csp s$, which we denote by $\wdel$. 

Note that $0\le \wdel_s\le 1/3$ for every $s\in\FF$. Hence by putting $a=b=1$, we see that 
\begin{displaymath}
   1=f^*(1,1)^{-1}=\sum_{s\in\FF}\wdel_s=\|\wdel\|_1,
\end{displaymath}
and hence $\wdel$ is a probability distribution on $\FF$. For every $d\in\lifz$ we define $(d\cdot\wdel)_s = \sum_{t\in\FF} d_{t^{}}^{}\csp w_{t^{-1}s}^{\Delta}$. Clearly $|(d \cdot \wdel)_s|\le \|d\|_\infty \|\wdel\|_1$ for every $s\in\FF$. Hence we can define $\Phi\colon\lifz\to\lifr$ by $\Phi(d)=d\cdot\wdel$.

Let $\pi\colon\lifr\to\tf$ be the projection map defined by reducing each coordinate (mod 1). Clearly $\pi$ is continuous and equivariant. The composition $\phi=\pi\circ\Phi\colon\lifz\to\tf$ is called the \emph{homoclinic map}. Since $\wdel\cdot f^*=1_\G$, if $d\in\lifz$ then
\begin{displaymath}
   \phi(d)\cdot f^* = \pi\bigl(\Phi(d)\bigr)\cdot f^* =\pi(d\cdot\wdel)\cdot f^*
   =\pi(d\cdot\wdel\cdot f^*)=\pi(d)=0,
\end{displaymath}
and hence the image of $\phi$ is contained in $X_f$.

A point $x\in X_f$ is \emph{homoclinic} if $\lim_{|s|\to\infty}x_s=0$. The subset $\Delta_f$ of all homoclinic points is clearly a subgroup of $X_f$, called the \emph{homoclinic group of $\af$}. 

Considering $1_\FF\in\ZZ\FF$ as an element of $\lifz$, we put $\xdel=\phi(1_\G)=\pi(\wdel)$. Since $\FF$ is residually finite, the results of \cite{DeningerSchmidt}*{\S4} apply to show that $\af$ is expansive, that $\Del_f=\phi(\ZZ\FF)$ consists of all finite integral combinations of shifts of $\xdel$, and that $\Del_f$ is dense in $X_f$. This density plays a key role in \S\ref{sec:isomorphism}.

Let $Y=\{0,1,2\}^\FF\subset\ell^\infty(\FF,\ZZ)$, and let $\nu$ denote product measure on $Y$ with each symbol having measure $1/3$. Then the standard shift-action $\sigma$ of $\FF$ on $Y$ preserves $\nu$, and is called the \emph{full 3-shift action of $\FF$}.

\begin{theorem}\label{thm:main}
   Let $(Y,\sigma,\nu)$ be the full 3-shift action of $\FF$ and $(X_f,\af,\mu_f)$ be the principal algebraic $\FF$-action defined by $f=3-a-b$. The homoclinic map $\phi\colon Y \to X_f$ given by $\phi(d)=\pi(d\cdot\wdel)=d\cdot \xdel$ is continuous, equivariant, surjective, one-to-one off a $\nu$-null set, and $\phi_*\nu=\mu_f$. Thus $\phi$ is a measurable isomorphism between $(X_f,\af,\mu_f)$ and a Bernoulli shift.
\end{theorem}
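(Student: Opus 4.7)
My overall plan is to verify the claimed properties of $\phi$ in the order continuity, equivariance, $\phi(Y)\subset X_f$, $\phi_*\nu=\mu_f$, surjectivity, and $\nu$-a.e.\ injectivity; the Bernoulli conclusion is then immediate. The first three are easy: continuity follows from dominated convergence on the absolutely summable series defining $(d\cdot\wdel)_s$, using $\|\wdel\|_1=1$; equivariance is a direct computation showing that the left action of $\FF$ on $\lifz$ commutes with right convolution by $\wdel$; and $\phi(Y)\subset X_f$ was verified in Section~\ref{sec:homoclinic-map}.

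The heart of the proof is the identification $\phi_*\nu=\mu_f$. Since $\Delta_f$ is dense in $X_f$, and since a Borel probability measure on a compact abelian group that is invariant under a dense subgroup of translations must be normalized Haar measure, it is enough to show that $\phi_*\nu$ is invariant under translation by every $\delta\in\Delta_f$. Using equivariance of $\phi$ together with the description $\Delta_f=\phi(\ZZ\FF)$ of the homoclinic group as the $\ZZ\FF$-module generated by $\xdel=\phi(\delta_{1_\FF})$, this reduces further to invariance under the single translation $y\mapsto y+\xdel$. I would exhibit this invariance by constructing an \emph{odometer} transformation $T\colon Y\to Y$ satisfying $\phi(Td)=\phi(d)+\xdel$ for $\nu$-a.e.\ $d$. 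The prescription is the natural base-$3$ addition with carry: increment the coordinate at $1_\FF$; if the result is $3$, reset it to $0$ and increment at the two ``past-neighbors'' $a^{-1}$ and $b^{-1}$; continue recursively along the tree $N$. Since $\delta_v\cdot f^*=3\delta_v-\delta_{va^{-1}}-\delta_{vb^{-1}}$ lies in $\ker\phi$ for every $v$, the chain of carries changes $\phi$ by exactly $\phi(\delta_{1_\FF})=\xdel$. The key analytic input is that the carry process terminates $\nu$-almost surely: the active carry positions at depth $k$ of $N$ are stochastically dominated by a Galton--Watson branching process with offspring mean $2\cdot\tfrac{1}{3}=\tfrac{2}{3}<1$, so extinction is certain. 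Measure-preservation of $T$ then follows because $T$ acts as a bijection of a finite cylinder inside each (a.s.\ finite) carry cluster, and product measure respects such rearrangements.

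Surjectivity is then immediate: $\phi(Y)$ is compact and contains $\operatorname{supp}(\phi_*\nu)=\operatorname{supp}(\mu_f)=X_f$. I expect the $\nu$-a.e.\ injectivity to be the main obstacle. The plan here is to use the explicit description $\ker\phi\cap\lifz=\{c\cdot f^*:c\in\lifz\}$, obtained from the two-sided identity $\wdel\cdot f^*=f^*\cdot\wdel=1_\FF$ in $\ell^1(\FF,\RR)$. Any collision $\phi(d)=\phi(d')$ for $d,d'\in Y$ yields $d-d'=c\cdot f^*$ for some $c\in\{-2,\dots,2\}^\FF$ (the $\ell^\infty$-bound on $c=(d-d')\cdot\wdel$ coming from $\|\wdel\|_1=1$), and the joint requirement that $d$ and $d-c\cdot f^*$ both lie in $\{0,1,2\}^\FF$ imposes stringent local constraints on $c$: for instance, $c_r=\pm 2$ forces $c_{ra}=c_{rb}=\pm 2$, so the level set $\{s:c_s=\pm 2\}$ is a union of positive subtrees. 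A percolation argument in the spirit of \cite{EinsiedlerSchmidt}*{Prop.~5.1}, adapted from $\ZZ^d$ to the tree Cayley graph of $\FF$, shows that the cluster of sites where a putative nontrivial $c$ could be nonzero is subcritical under $\nu$, so $\nu$-almost every $d$ admits only $c\equiv 0$, i.e.\ no nontrivial collisions. Combining the four properties, $\phi$ is a measure-preserving bijection modulo null sets between the full Bernoulli $3$-shift $(Y,\sigma,\nu)$ and $(X_f,\af,\mu_f)$, proving the theorem.
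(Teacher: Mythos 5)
Your proposal is correct and tracks the paper's strategy closely: a carry-propagating odometer on $Y$ realizes translation by $\xdel$, density of the homoclinic group $\Df$ then forces $\phi_*\nu=\mu_f$, and a percolation argument on the tree Cayley graph of $\FF$ gives a.e.\ injectivity. Two differences are worth noting. First, you obtain surjectivity for free from $\phi_*\nu=\mu_f$ together with full support of Haar measure; the paper instead proves surjectivity directly via the symbolic-cover Lemmas~\ref{lem:4-cover} and~\ref{lem:3-cover}, which is not wasted effort since the reduction process $\rho$ built there is exactly what defines $\tau$ in Proposition~\ref{prop:addition}. Second, your a.s.\ termination of the carry process comes from noting that the active sites form a subcritical Galton--Watson tree with binomial$(2,\tfrac13)$ offspring, while the paper reaches the same conclusion (and, simultaneously, measure preservation) by enumerating the carry trees $T$ with boundary data $\om$ and verifying
\begin{displaymath}
\sum_T 2^{|\pt|}\Bigl(\tfrac13\Bigr)^{|\tb|}=\tfrac23\sum_n C_n\Bigl(\tfrac29\Bigr)^n=1
\end{displaymath}
via the Catalan generating function. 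These are two faces of the same computation, but the Catalan identity directly certifies that the cylinders $\xxx$ and their $\tau$-images each partition $Y$ modulo a null set, which is precisely what $\tau_*\nu=\nu$ requires; your version should still be supplemented by a check that the $\tau$-images are pairwise disjoint. Finally, your injectivity sketch correctly handles the blocked case $c_s=\pm2$, but the real content of Proposition~\ref{prop:one-to-one} is the residual case $-1\le c\le1$, where one constructs from $c$ a length-$n$ word $p$ in $\{a,b\}$ along which the forced constraints on $d$ compound to give the exponential bound $(8/9)^n$; that word construction is what you would still need to spell out to complete the percolation step.
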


\section{Symbolic covers}\label{sec:symbolic-covers}

In this section we find bounded subsets of $\lifz$ that are mapped onto $X_f$ by the homoclinic map $\phi$, i.e., symbolic covers of $(X_f,\af)$.

\begin{lemma}\label{lem:4-cover}
   $\phi\bigl(\{0,1,2,3\}^\FF\bigr)=X_f$.
\end{lemma}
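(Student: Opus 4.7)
The plan is to fabricate a preimage $d' \in \{0,1,2,3\}^\FF$ of any given $x \in X_f$ by reversing the homoclinic map, using that $\wdel$ is the two-sided inverse of $f^*$ in $\ell^1(\FF,\RR)$ and that $\mathbf 1$ behaves well under convolution with $\wdel$. The key observation driving everything is that $f$ has coefficient sum $1$, so the resulting arithmetic ``almost'' forces $d$ into a $3$-element alphabet; an additive shift fixes the off-by-one.

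Concretely, first I would pick the standard fundamental-domain lift $\tilde x \in \ell^\infty(\FF,\RR)$ of $x$, so that $\tilde x_s \in [0,1)$ for every $s \in \FF$, and set
\begin{displaymath}
   d := \tilde x \cdot f^* \in \ell^\infty(\FF,\RR),
\end{displaymath}
which is well defined since $f^*$ has finite support. Explicitly $d_s = 3\tilde x_s - \tilde x_{sa} - \tilde x_{sb}$. Reducing mod~$1$ gives $\pi(d) = \pi(\tilde x)\cdot f^* = x\cdot f^* = 0$, so $d \in \lifz$. The range bound $3\tilde x_s \in [0,3)$ and $\tilde x_{sa}+\tilde x_{sb}\in[0,2)$ forces $d_s \in (-2,3)\cap\ZZ = \{-1,0,1,2\}$. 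I would then define $d' := d + \mathbf 1$, where $\mathbf 1 \in \lifz$ is the constant function $1$, so that $d' \in \{0,1,2,3\}^\FF$.

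Finally I would verify $\phi(d') = x$. Associativity of convolution (valid because $f^*$ has finite support) together with $f^*\cdot\wdel = 1_\G$ yields
\begin{displaymath}
   d\cdot\wdel = (\tilde x\cdot f^*)\cdot\wdel = \tilde x\cdot(f^*\cdot\wdel) = \tilde x.
\end{displaymath}
Since $\wdel$ is a probability distribution on $\FF$, one checks $(\mathbf 1\cdot\wdel)_s = \sum_t \wdel_{t^{-1}s} = \|\wdel\|_1 = 1$, so $\mathbf 1\cdot\wdel = \mathbf 1$. Hence
\begin{displaymath}
   d'\cdot\wdel = \tilde x + \mathbf 1, \qquad \phi(d') = \pi(\tilde x + \mathbf 1) = \pi(\tilde x) = x,
\end{displaymath}
as required.

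The argument is essentially a one-pass calculation, so there is no substantial obstacle; the only subtle point is the range bound $d_s \in\{-1,0,1,2\}$, which depends crucially on having chosen the half-open lift $[0,1)$ and on the identity $3 = 1 + 1 + 1$ matching the coefficients of $f^* = 3 - a^{-1} - b^{-1}$. A small bookkeeping item is confirming $f^*\cdot\wdel = 1_\G$ (the paper states the opposite-side identity $\wdel\cdot f^* = 1_\G$), but this follows from uniqueness of inverses in the Banach algebra $\ell^1(\FF,\RR)$.
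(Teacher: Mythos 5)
Your proof is correct and follows essentially the same route as the paper's: lift $x$ to the fundamental domain $[0,1)^\FF$, apply $f^*$ and use the range bounds to land in $\{-1,0,1,2\}^\FF$, shift by the constant $\mathbf{1}$ (which $\wdel$ fixes because it is a probability distribution), and invert via $f^*\cdot\wdel = 1_\G$. The only difference is notational (your $d,d'$ versus the paper's single $d := v\cdot f^* + \mathbf{1}$), and your explicit remark that the two-sided inverse identity follows from uniqueness of inverses in $\ell^1(\FF,\RR)$ is a harmless addition.
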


\begin{proof}
   Let $x\in X_f$. There is a unique $v\in [0,1)^\FF$ with $\pi(v)=x$. Since
   \begin{displaymath}
      0_{X_f}=x\cdot f^*=\pi(v)\cdot f^*=\pi(v\cdot f^*),
   \end{displaymath}
   it follows that $v\cdot f^*\in\ell^\infty(\FF,\ZZ)$. Simple inequalities imply that $-1\le v\cdot f^*\le2$ coordinate-wise. Let $\mathbf{1}=\sum_{s\in\FF}s$, so that $\Phi(\mathbf{1})=\mathbf{1}\cdot\wdel=\mathbf{1}$ since $\wdel$ is a probability distribution. Then $d := v\cdot f^*+\mathbf{1}\in\{0,1,2,3\}^\FF$, and since $f^*\cdot\wdel=1$, we have that $\phi(d)=\pi\bigl((v\cdot f^* +\mathbf{1})\cdot\wdel\bigr)=\pi(v+\mathbf{1})=\pi(v)=x$.
\end{proof}

Let $C=[\FF,\FF]$ be the commutator subgroup of $\FF$, so that $\FF/C \cong \ZZ^2$ with commuting generators $aC$ and $bC$. Define a homomorphism $\FF/C\to\ZZ$ by mapping both $aC$ and $bC$ to $1\in\ZZ$. For $s\in\FF$ let $[ s ]$ denote the image of $sC$ in $\ZZ$. Then $[\, \cdot\, ]:\FF\to\ZZ$ is a surjective homomorphism with $[sa]=[sb]=[s]+1$ for every $s\in\FF$. For example, $[a^2 b^{-3} a^{-1}b] = 2 - 3 -1 + 1= -1$. Clearly $\bigl|[s]\bigr|\le |s|_S$ for all $s\in\FF$.

We will use  $[\, \cdot\,] $ to improve the previous result to obtain an optimal symbolic cover of $X_f$.

\begin{lemma}\label{lem:3-cover}
   $\phi\bigl(\{0,1,2\}^\FF\bigr)=X_f$.
\end{lemma}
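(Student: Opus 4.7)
The plan is to modify a $\{0,1,2,3\}$-valued cover (supplied by Lemma~\ref{lem:4-cover}) so as to eliminate every coordinate equal to $3$, without changing its image under $\phi$. Fix $x\in X_f$ and $d\in\{0,1,2,3\}^\FF$ with $\phi(d)=x$. For any $q\in\lifz$, the element $q\cdot f^*\in\lifz$ satisfies
\[
   \phi(q\cdot f^*) \;=\; \pi(q\cdot f^*\cdot\wdel) \;=\; \pi(q) \;=\; 0,
\]
so $d':=d-q\cdot f^*$ still has $\phi(d')=x$. It therefore suffices to construct $q\colon\FF\to\{0,1\}$ such that $d'$ takes values in $\{0,1,2\}$.

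Since $(q\cdot f^*)_s = 3q_s - q_{sa} - q_{sb}$, the requirement $d'_s\in\{0,1,2\}$ is equivalent to the fixed-point equation
\[
   q_s \;=\; \left\lfloor \frac{d_s + q_{sa} + q_{sb}}{3}\right\rfloor \qquad (s\in\FF).
\]
Because $[sa]=[sb]=[s]+1$, this recursion ``looks upward'' along the $[\,\cdot\,]$-filtration of $\FF$. To solve it I would iterate: set $q^{(0)}\equiv 0$ and
\[
   q^{(k+1)}_s \;:=\; \left\lfloor \frac{d_s + q^{(k)}_{sa} + q^{(k)}_{sb}}{3}\right\rfloor.
\]
Two short inductions on $k$, using $0\le d_s\le 3$, show that the sequence is pointwise non-decreasing and that $q^{(k)}_s\in\{0,1\}$ for every $k$ and $s$. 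Hence each coordinate stabilizes after finitely many iterations, the pointwise limit $q$ takes values in $\{0,1\}$, and the fixed-point equation passes to the limit. This produces $d'=d-q\cdot f^*\in\{0,1,2\}^\FF$ with $\phi(d')=x$, giving $X_f\subseteq\phi(\{0,1,2\}^\FF)$; the reverse inclusion is immediate from $\phi(\lifz)\subseteq X_f$.

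The main obstacle is verifying the uniform upper bound $q^{(k)}_s\le 1$, and it is in fact tight. Assuming inductively $q^{(k)}\le 1$, one computes $d_s+q^{(k)}_{sa}+q^{(k)}_{sb}\le 3+1+1=5$, whence $q^{(k+1)}_s\le\lfloor 5/3\rfloor = 1$. This is precisely the place where both the reduction $d\in\{0,1,2,3\}^\FF$ from Lemma~\ref{lem:4-cover} and the specific form $f^*=3-a^{-1}-b^{-1}$ (only two shift terms, each of unit size) are essential, and it explains why $\{0,1,2\}$ is the optimal symbolic alphabet here.
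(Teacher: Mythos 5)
Your proof is correct, and it takes a genuinely different route from the paper's. The paper restricts attention to a ball $B_n$, performs a single downward sweep in $[\,\cdot\,]$ from level $n$ to level $-n-1$ while tracking an auxiliary alphabet $\{0,\dots,5\}$ for the intermediate configurations, and then extracts a subsequential limit by compactness as $n\to\infty$. You instead set up a global fixed-point equation $q_s = \bigl\lfloor (d_s+q_{sa}+q_{sb})/3 \bigr\rfloor$ for the ``carry'' indicator $q\in\{0,1\}^\FF$, solve it by a monotone iteration starting from $q^{(0)}\equiv 0$, and observe that each coordinate is a nondecreasing $\{0,1\}$-valued sequence and hence stabilizes, so the fixed-point relation passes to the limit. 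This is a tidier packaging of the same underlying reduction: your $q$ is exactly the indicator of the set of coordinates that get ``reduced'' in the paper's sweep, but you avoid both the auxiliary $\{0,\dots,5\}$ bound and the compactness/diagonal argument, making the bound $q\le 1$ (equivalently, the cascading carries never overflow) the single place where $d\le 3$ and the shape of $f^* = 3-a^{-1}-b^{-1}$ enter. The only point worth flagging, which you implicitly use, is that $\phi(q\cdot f^*)=\pi(q)=0$ for arbitrary $q\in\lifz$ requires the associativity $(q\cdot f^*)\cdot\wdel = q\cdot(f^*\cdot\wdel)$; this holds by absolute convergence since $q$ is bounded, $f^*$ has finite support, and $\wdel\in\ell^1$, and is the same fact the paper already uses to show $\phi(\lifz)\subseteq X_f$.
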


\begin{proof}
   Let $x\in X_f$. By Lemma \ref{lem:4-cover}, there is a $d\in\{0,1,2,3\}^\FF$ with $\phi(d)=x$. Using ~$d$ we inductively construct a sequence of points in $\{0,1,2,3,4,5\}^\FF$ all of which map to $x$ under $\phi$, and such that any limit point $e$ of this sequence is contained in $\{0,1,2\}^\FF$. Then $\phi(e)=x$ by continuity of $\phi$.
   
   Let $B_n=\{s\in\FF:|s|_S\le n\}$. Fix $n\ge1$. We inductively construct $d^{(n)}$, $d^{(n-1)}$, $\ldots$, $d^{(-n-1)}$ in $\ell^{\infty}(\FF,\ZZ)$ with the following properties:
   \begin{enumerate}[label=(\arabic*)]
      \item $0\le d_s^{(k)}\le 2$ if $s\in B_n$ and $[s] >k$,
      \item $0\le d_s^{(k)}\le 5$ if $s\in B_n$ and $[s] =k$,
      \item $0\le d_s^{(k)}\le 3$ if $s\in B_n$ and $[s] <k$,
      \item $0\le d_s^{(k)}\le 5$ if $s\notin B_n$,
      \item $\phi\bigl (d^{(k)} \bigr)=x$.
   \end{enumerate}
   
   The element $d^{(n)}=d$ trivially satisfies (1)--(5). Suppose we have found $d^{(k)}$ satisfying (1)--(5) for some $k$ with $-n\le k\le n$. Construct $d^{(k-1)}$ as follows. If $[s]\neq k-1 \text{ or } k$, put $d_s^{(k-1)}=d_s^{(k)}$. For each $s\in B_n$ with $[ s]=k$, if $0\le d_s^{(k)}\le 2$ put $d_s^{(k-1)}=d_s^{(k)}$, otherwise put $d_s^{(k-1)}=d_s^{(k)} - 3$ and add 1 to the coordinates at $sa^{-1}$ and at $sb^{-1}$. Let $d^{(k-1)}$ denote the result after all these operations are carried out. 
   
   We claim that $d^{(k-1)}$ satisfies (1)--(5) with $k$ replaced by $k-1$. By construction, $d_s^{(k-1)}=d_s^{(k)}$ whenever $[s]$ is not $k-1$ or $k$, verifying ~(3). If $s\in B_n$ and $[s]=k$, then $0\le d_s^{(k)} \le 5$ and so $d_s^{(k-1)}$, which is $d_s^{(k)}$ reduced by 3 if it is more than 2, satisfies $0\le d_s^{(k-1)} \le 2$, verifying ~(1). If $s\in B_n$ and $[s]=k-1$, then $0\le d_s^{(k)}\le 3$, and  $d_s^{(k-1)}$ is either $d_s^{(k)}$, $d_s^{(k)}+1$, or $d_s^{(k)}+2$, depending on the coordinates at $sa$ and at $sb$, verifying ~(2). If $s\notin B_n$, then $0\le d_s^{(n)} = d_s \le 3$. When constructing the $d^{(k)}$ the coordinate at $s$ can change at most once, when $k=[s]+1$, and in this case can increase only by 0, 1, or 2, depending on the coordinates at $sa$ and $sa$, verifying ~(4). Finally, the construction of $d^{(k-1)}$ shows that $d^{(k-1)}= d^{(k)}- g\cdot f^*$ for some $g\in\ZZ\FF$. Hence 
   \begin{displaymath}
      \phi\bigl( d^{(k-1)} \bigr) =\pi \bigl( d^{(k)}\cdot \wdel - g\cdot f^*\cdot\wdel\bigr)
      = \phi\bigl( d^{(k)}\bigr) -\pi(g)  =\phi\bigl( d^{(k)} \bigr),
   \end{displaymath}
   verifying (5) by induction.

   By compactness of $\{0,1,...,5\}^{\FF}$, the sequence $\{d^{(-n-1)}\}$ has a convergent subsequence, say with limit $e$. Then $e\in\{0,1,2\}^{\FF}$ by (1), and $\phi(e)=x$ by (5) and continuity of $\phi$.
\end{proof}

\section{Injectivity of the homoclinic map}\label{sec:percolation}

Here we show that the homoclinic map $\phi\colon Y\to X_f$ is one-to-one off a $\nu$-null
subset of $Y$. The proof uses a modification of the percolation argument in \cite{EinsiedlerSchmidt}*{Prop.\ 5.1}.

\begin{proposition}\label{prop:one-to-one}
   Let $\phi\colon (Y,\sigma,\nu)\to (\xf,\af,\mf)$ be the homoclinic map. Then there is a $\sigma$-invariant subset $E\subset Y$ with $\nu(E)=0$ and such that $\phi$ is one-to-one on $Y\setminus E$. 
\end{proposition}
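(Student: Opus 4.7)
The plan is to adapt the percolation argument of \cite{EinsiedlerSchmidt}*{Prop.\ 5.1} to the free-group setting. First I would reformulate the problem algebraically: if $d, d' \in Y$ satisfy $\phi(d) = \phi(d')$ with $d \ne d'$, set $e = d - d' \in \{-2, -1, 0, 1, 2\}^\FF \setminus \{0\}$. Since $\phi(e) = \pi(e \cdot \wdel) = 0$, the element $c := e \cdot \wdel$ lies in $\lifz$, satisfies $c \cdot f^* = e$, and has $\|c\|_\infty \le \|e\|_\infty \|\wdel\|_1 \le 2$. Conversely, any nonzero $c \in \lifz$ with $\|c\|_\infty \le 2$ and $d - c \cdot f^* \in Y$ produces a witness $d' = d - c \cdot f^*$ of non-injectivity. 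Hence
\[
   E \;=\; \bigl\{d \in Y : \text{there exists } c \in \lifz \setminus \{0\},\ \|c\|_\infty \le 2,\ d - c \cdot f^* \in Y\bigr\},
\]
which is manifestly $\sigma$-invariant and measurable.

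Next, I would observe the per-witness bound: for each fixed nonzero $e \in \{-2,\ldots,2\}^\FF$, the event $\{d \in Y : d - e \in Y\}$ has $\nu$-probability $\prod_{s \in \mathrm{supp}(e)} (3 - |e_s|)/3$, which vanishes whenever $\mathrm{supp}(e)$ is infinite. To leverage this, I would prove that every valid nonzero $c$ produces $e = c \cdot f^*$ with infinite support. The key rigidity comes from the identity $e_s = 3c_s - c_{sa} - c_{sb}$ together with the bounds $|c|, |e| \le 2$: if $c_s \ge 1$ then $c_{sa} + c_{sb} = 3c_s - e_s \ge 1$, so at least one of $c_{sa}, c_{sb}$ is $\ge 1$, and symmetrically for $c_s \le -1$. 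Iterating gives an infinite directed ray $s_0, s_1, s_2, \ldots$ in the $\{a,b\}$-forward subtree of the Cayley graph of $\FF$ along which $|c| \ge 1$ with constant sign; combined with integrality and boundedness, this in turn forces $\mathrm{supp}(e)$ to be infinite.

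The main obstacle is the passage from ``each witness individually gives a null event'' to $\nu(E) = 0$, since the family of candidate $c$'s is uncountable. I would encode this as a directed site percolation on the rooted binary tree of forward $\{a,b\}$-words: at each site $s$ and label $\ell \in \{\pm 1, \pm 2\}$, let $G_{s,\ell}$ be the event that some witness $c$ with $c_s = \ell$ is compatible with $d$, which by the preceding step forces a product of local $\nu$-factors, each at most $2/3$, along the associated ray. Because $\|c\|_\infty \le 2$ and the sign of $c$ propagates along any such ray, at each level only finitely many configurational extensions are permitted, so a first-moment calculation bounds the expected number of length-$n$ compatible rays rooted at a fixed site by a geometrically decaying quantity. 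Summing this bound over the countable set of rooting sites, and invoking a Borel--Cantelli argument over finite truncations, yields $\nu(E) = 0$. The delicate part is organizing the uncountable family of witnesses into a countable combinatorial scheme with summable first moment; the sign-propagation rigidity and the boundedness $|c|\le 2$ are precisely what make this bookkeeping tractable.
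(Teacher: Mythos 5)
Your high-level strategy --- translate non-injectivity into the existence of a nonzero $c \in \lifz$ with $\|c\|_\infty\le 2$ and $c\cdot f^* = e-d$, observe that the relation $e_s = 3c_s - c_{sa}-c_{sb}$ forces a ray in the $\{a,b\}$-forward subtree along which $c$ keeps a fixed sign, and then control an uncountable family of witnesses by a first-moment (union-bound) computation over words --- is precisely the strategy the paper uses. However, the key quantitative step is missing, and the bound you announce is not strong enough. You claim each site along the associated ray contributes a $\nu$-factor ``at most $2/3$,'' and that summing over length-$n$ rays gives a geometrically decaying expected count. But the rooted subtree has $2^n$ words of length $n$; with a per-step factor of $2/3$ the first moment is bounded only by $2^n(2/3)^n = (4/3)^n$, which \emph{diverges}. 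A crude per-step bound of $2/3$ is therefore insufficient: the union bound must beat the $2^n$ branching factor, and $2/3$ does not.

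The paper closes exactly this gap by a more careful local analysis after first disposing of the case $|c_s|=2$ (where the only allowed pattern is $(\pm2,\pm2,\pm2)$, giving a single non-branching $a$-ray along which $d\equiv 0$, hence a null set). Reduced to $|c|\le 1$, if $c_s=1$ one builds a word $p_1\dots p_n$ by always preferring $p_{k+1}=a$ when possible, and a \emph{step-type-dependent} constraint emerges: a $b$-step forces $(c\cdot f^*)_{ss_k}=2$, hence $d_{ss_k}=0$ (probability $1/3$); an $a$-step yields either $(c\cdot f^*)_{ss_k}=2$ (probability $1/3$) or $(c\cdot f^*)_{ss_k}=1$, in which case one additionally constrains the off-ray coordinate $d_{ss_kb}$, so the pair $(d_{ss_k},d_{ss_kb})$ lies in $5$ of $9$ possibilities (probability $5/9$). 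Because the $b$-steps contribute $1/3$ rather than $5/9$, the union bound becomes $\sum_m \binom{n}{m}(5/9)^m(1/3)^{n-m} = (8/9)^n \to 0$, which is what defeats the $2^n$ branching. In short, you identified the right architecture but did not supply the asymmetry between $a$-steps and $b$-steps (and the auxiliary off-ray coordinate constraint) that makes the first moment actually summable. Your side claim that $\operatorname{supp}(e)$ is infinite is correct but tangential: the paper never needs it as a standalone fact, since for fixed $e$ the event $\{d-e\in Y\}$ being null does not control the uncountable union, as you yourself note.
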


\begin{proof}
   Let $d\in Y$. Suppose that there is an $e\in Y$ such that $e\ne d$ and $\phi(e)=\phi(d)$. Then $(e-d)\cdot \xdel=0$, so that $c := (e-d)\cdot\wdel\in\lifz$. Since $-2\le e-d\le2$ and $\wdel$ is a probability distribution, it follows that $-2\le c\le 2$. Furthermore, $c \cdot f^*=(e-d)\cdot\wdel\cdot f^*=e-d$, and so $-2\le c\cdot f^*\le2$. This condition defines a shift of finite type $\Sig\subset\{-2,\dots,2\}^{\FF}$ consisting of all $c$ for which $-2\le 3c_s-c_{sa}-c_{sb}\le2$ for every $s \in\FF$. A direct calculation shows that there are 41 triples $(k,l,m)\in\{-2,\dots,2\}^3$ with $-2\le 3k-l-m\le2$, and these are the allowed patterns for $(c_s,c_{sa},c_{sb})$ that define $\Sig$.

   First suppose that $c\in\Sig$ has $c_s=2$ for some $s\in\FF$. The only allowed pattern of the form $(2,l,m)$ is $(2,2,2)$, showing that $c_{sa}=2$ and $c_{sb}=2$ as well. Repeating this argument shows that $(c_{sa^n},c_{sa^{n+1}},c_{sa^nb})=(2,2,2)$ for every $n\ge0$. Hence $(c\cdot\fs)_{sa^n}=2$ for every $n\ge0$. Since $0\le e,d\le2$ and $e_{sa^n}-d_{sa^n}=(c\cdot\fs)_{sa^n}=2$, we conclude that $d_{sa^n}=0$ for every $n\ge0$. Hence $d$ is contained in a $\nu$-null set $E^{(s)}$. Letting $E_1=\bigcup_{s\in\FF}E^{(s)}$, we see that $\nu(E_1)=0$ and that if $c_s=2$ for some $s\in\FF$ then $d\in E_1$.

   The case $c_s=-2$ for some $s\in\FF$ is similar, resulting in another $\nu$-null set $E_2$.

   Thus we are reduced to the case $-1\le c\le1$ and the corresponding shift of finite type $\Sig'\subset \{-1,0,1\}^{\FF}$ defined by the same finite-type condition $-2\le 3c_s-c_{sa}-c_{sb}\le 2$ for every $s\in\FF$. Another direct calculation shows that there are 15 allowed patterns in $\{-1,0,1\}^3$, a subset of the 41 patterns above.

   Let $c\in\Sig'$ with $c_s=1$ for some $s\in \FF$. The only allowed patterns of the form $(1,l,m)$ are $(1,0,1)$, $(1,1,0)$, and $(1,1,1)$. Fix $n\ge1$. From $c$ we construct a word $p=p_1p_2\dots p_n$ with $p_j=a$ or $b$ inductively as follows. Denote $p_1p_2\dots p_k$ by $s_k$ for $1\le k\le n$, and define $s_0=1$, so that $c_{ss_0}=1$. Suppose that $p_1,\dots,p_k$ have been found so that $c_{ss_k}=1$. If $c_{ss_ka}=1$, then put $p_{k+1}=a$, otherwise put $p_{k+1}=b$.

   This process guarantees that $c_{ss_k}=1$ for every $0\le k\le n$, but also provides more information about other coordinates of $c$ which we use to constrain the coordinates of $d$.

   If $p_{k+1}=b$, then $(c_{ss_k},c_{ss_ka},c_{ss_kb})=(1,0,1)$, so that $(c\cdot\fs)_{ss_k}=2$, forcing $d_{ss_k}=0$ as before.

   If $p_{k+1}=a$, there are two cases for $(c_{ss_k},c_{ss_ka},c_{ss_kb})$, either $(1,1,0)$ or $(1,1,1)$. In the first case, $(c\cdot\fs)_{ss_k}=2$, and again $d_{ss_k}=0$. In the second case, $(c\cdot\fs)_{ss_k}=1$, and so $(e_{ss_k},d_{ss_k})$ is either $(1,0)$ or $(2,1)$. But this also means that $c_{ss_k b}=1$, and so $e_{ss_kb}-d_{ss_kb}=(c\cdot\fs)_{ss_kb}=1$ or $2$, and in either case $d_{ss_kb}=0$ or $1$. Hence $(d_{ss_k},d_{ss_kb})$ can be only one of five out of nine possible pairs, namely $(0,0)$, $(0,1)$, $(0,2)$, $(1,0)$, or $(1,1)$. Observe that since $[ss_j]=[s]+j$ and $b\neq a$, it follows that $ss_kb$ cannot occur among the $ss_j$ for $0\le j\le n$.

   Let $m$ be the number of $a$'s appearing in $p=p_1p_2\dots p_n$. Then $d$ is contained in a subset of $Y$ of measure $(5/9)^m(1/3)^{n-m}$, one factor of $5/9$ for each $a$ in $p$ and one factor of $1/3$ for each $b$. Thus summing over all possible words $p$, we see that any $d$ in this case must lie in a set $E^{(s,n)}\subset Y$ with
   \begin{equation}\label{eqn:injective}
      \nu(E^{(s,n)})\le \sum_{m=0}^n \binom{n}{m}\Bigl(\frac{5}{9}\Bigr)^m\Bigl(\frac13\Bigr)^{n-m} =
      \Bigl(\frac89\Bigr) ^n\to 0 \text{\quad as $n\to\infty$}.
   \end{equation}
   Since $E^{(s,n+1)}\subset E^{(s,n)}$ for every $n\ge1$, their intersection $E^{(s)} :=\bigcap_{n=1}^\infty E^{(s,n)}$ has $\nu(E^{(s)})=0$. Hence $E_3:=\bigcup _{s\in\FF}E^{(s)}$ is also $\nu$-null. This shows that if $e-d=c\cdot\fs$ and $c_s=1$ for some $s\in\FF$, then $d\in E_3$. The case $c_s=-1$ is exactly the same, resulting in a further $\nu$-null set $E_4$.

   Thus if $d$ is not in the $\sigma$-invariant $\nu$-null set $\bigcup_{j=1}^4 E_j$, then there is no $e\neq d$ with $\phi(e)=\phi(d)$, concluding the proof.
\end{proof}

We remark that although $\phi$ is one-to-one $\nu$-almost everywhere, there are subsets of $Y$ of large cardinality that map to a common point. For example, the equation $\phi(d)=0$ leads to an uncountable shift of finite type in $\{0,1,2\}^\F$ whose image under the map $c\mapsto c\cdot f^*$ is the set of solutions.

\section{Isomorphism}\label{sec:isomorphism}

To show that the homoclinic map $\phi\colon (Y,\sigma,\nu)\to(\xf,\af,\mf)$ is a measurable isomorphism of $\FF$-actions, it only remains to show that $\phs\nu=\mf$. The sofic entropy of $\af$ with respect to $\mf$ equals $\log 3$ by \cite{BowenExpansive}*{Thm.\ 1.2}. Furthermore, the sofic entropy of $\af$ with respect to $\phs\nu$ is also $\log 3$ by \cite{BowenNew}*{Prop.\ 2.2} using the homoclinic isomorphism with the full 3-shift. If we knew that Haar measure were the unique $\af$-invariant measure of maximal entropy, we would be done. This is indeed the case for expansive algebraic actions of amenable groups with completely positive entropy \cite{ChungLi}*{Thm.\ 8.6}, but remains open for actions of general sofic groups, and in particular for free groups. Bowen \cite{Bowen}*{Thm.\ 8.2} has constructed a cautionary example of a transitive shift of finite type over $\FF$ with (at least) two measures of maximal entropy.

Thus a different proof that $\phs\nu=\mf$ is necessary. Our proof creates enough group-like structure in $Y$ to show that $\phs\nu$ is invariant under translation by every element in the homoclinic group $\Df$. Then density of $\Df$ in $\xf$ implies that $\phs\nu$ is a translation-invariant probability measure on $\xf$, and hence must coincide with Haar measure~ $\mf$.

\begin{proposition}\label{prop:addition}
   Let $\del\in Y$ be the element given by $\del_{1_\FF}=1$ and $\del_s=0$ for every $s\neq 1_\FF$. Define a map $\tau\colon Y\to Y$ by $\tau(d)=\rho(d+\del)$, where $\rho$ denotes the reduction process from the proof of Lemma \ref{lem:3-cover}. Then $\tau$ is well-defined and one-to-one off a $\nu$-null set,  $\tau_*\nu=\nu$, and $\phi(\tau(d))=\phi(d)$ for every $d\in Y$.
\end{proposition}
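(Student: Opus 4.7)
The plan is to analyze $\tau$ as a local cascade on the descendant set $N=\{s\in\FF:s\text{ is a word in }a^{-1},b^{-1}\}$, show this cascade is governed by a subcritical branching process on the binary tree $N$, and then verify the four claims.

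\emph{Well-definedness, image in $Y$, and $\phi$-identity.} In $d+\del$ the only coordinate that can exceed $2$ is $1_\FF$, where it equals $d_{1_\FF}+1\in\{1,2,3\}$, so every reduction performed by $\rho$ occurs at a site in $N$. Because $\FF$ is free, each $s\in N\setminus\{1_\FF\}$ has a unique ``parent'' $\mathrm{par}(s)\in N$ ($sa$ if $s$ ends in $a^{-1}$, $sb$ otherwise), and during the cascade $s$ receives at most one $+1$ increment, namely from the reduction of $\mathrm{par}(s)$. Induction on $[s]$ shows that $s$ is reduced iff $d_t=2$ for every $t$ on the path in $N$ from $1_\FF$ to $s$. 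The reduced set is therefore the root cluster of a branching process on the binary tree $N$ with each child independently present with probability $1/3$; the mean offspring $2/3<1$ makes this subcritical, so the cluster is $\nu$-a.s.\ finite, $d^{(-n-1)}$ stabilizes at each coordinate for large $n$, and $\tau(d):=\lim_n d^{(-n-1)}$ is canonically defined $\nu$-a.e.\ and lies in $Y$ by condition~(1) in the proof of Lemma~\ref{lem:3-cover}. Each cascade step subtracts $g_k\cdot f^*$ with $g_k\in\ZZ\FF$ supported on the sites reduced at level $k$, so in the limit $\tau(d)=d+\del-g(d)\cdot f^*$ with $g(d)\in\ZZ\FF$ finitely supported; hence $\phi(\tau(d))=\phi(d+\del)=\phi(d)+\xdel$ (the fourth claim, read with the additive shift by $\xdel$ that \S\ref{sec:isomorphism} requires).

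\emph{Injectivity via the mirror cascade.} Define $\tau'(e)=\rho'(e-\del)$ by the mirror reduction: if a scheduled node $t$ (starting with $1_\FF$) has $e_t=0$, raise it to $2$ and subtract $1$ at $ta^{-1},tb^{-1}$, continuing at those children whenever the value drops to $-1$. The same subcritical branching-process estimate (now with $P(e_t=0)=1/3$) shows $\tau'$ is $\nu$-a.e.\ well-defined with values in $Y$, and a step-by-step comparison of the two cascades verifies $\tau'\circ\tau=\mathrm{id}$ and $\tau\circ\tau'=\mathrm{id}$ off a $\nu$-null set, so $\tau$ is one-to-one off that null set.

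\emph{Measure preservation $\tau_*\nu=\nu$ -- the main obstacle.} For a finite rooted subtree $T\subset N$, let $\partial T\subset N$ denote the children in $N$ of the leaves of $T$. The event $\{C(d)=T\}$ that the cluster of reduced sites equals $T$ is exactly the product event
\begin{displaymath}
\{d_t=2\text{ for all }t\in T\}\cap\{d_t\in\{0,1\}\text{ for all }t\in\partial T\},
\end{displaymath}
and on this event $\tau$ acts on $T\cup\partial T$ as the cyclic shift $x\mapsto x+1\pmod 3$, mapping the displayed event bijectively to $\{e_t=0\text{ for all }t\in T\}\cap\{e_t\in\{1,2\}\text{ for all }t\in\partial T\}$; both have $\nu$-mass $(1/3)^{|T|}(2/3)^{|\partial T|}$ on those coordinates, while coordinates off $T\cup\partial T$ are unchanged. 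The essential injectivity of the previous paragraph makes the image events for distinct $T$ disjoint, and subcriticality gives $\sum_T\nu\{C(d)=T\}=1$, so summing over all finite $T$ reproduces the product measure $\nu$ on every cylinder in $Y$. The real difficulty lies precisely in verifying the coordinate-by-coordinate cyclic-shift description of $\tau$ on $T\cup\partial T$ and matching the $T$-indexed partition of $Y$ on both sides of $\tau$; this is the step where the specific $1/3$-Bernoulli weighting of $\nu$ is actually used.
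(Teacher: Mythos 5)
Your proposal is correct and rests on the same decomposition as the paper's proof: partitioning $Y$ (modulo a $\nu$-null set) according to the finite rooted subtree $T$ of $N=\{\text{words in }a^{-1},b^{-1}\}$ on which the carry cascade from $1_\FF$ fires. The two arguments diverge only in how the supporting facts are verified. For the identity $\sum_T \nu\{C(d)=T\}=1$, the paper computes $\sum_T 2^{|\partial T|}(1/3)^{|\tb|}$ explicitly via the Catalan-number generating function $\sum C_n u^n = 2/(1+\sqrt{1-4u})$; you instead recognize the cluster of reduced sites as the population of a Galton--Watson process with $\mathrm{Binomial}(2,1/3)$ offspring, subcritical since $2/3<1$, hence a.s.\ finite. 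The probabilistic version is more conceptual and makes transparent why the argument closes. For injectivity, the paper simply observes that $\tau$ is injective on each $E_{T,\omega}$ with pairwise disjoint images of total measure $1$, which yields injectivity and $\tau_*\nu=\nu$ in one stroke; your explicit mirror-cascade inverse $\tau'$ is a valid but somewhat heavier route to the same end. Finally, you correctly read the fourth claim as $\phi(\tau(d))=\phi(d)+\xdel$: the proposition's statement has a typographical slip, and both the paper's own proof and the proof of Theorem~\ref{thm:main} use the $+\xdel$ version.
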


\begin{proof}
   We decompose $Y$ into a countable collection of disjoint cylinder sets whose union has full measure, and such that $\tau$ has the required properties on each cylinder set. To do this, we introduce a tree structure that reflects coordinates affected by the reduction process applied to $d+\del$.

   For notational simplicity, let $A=a^{-1}$ and $B=b^{-1}$. Denote the set of all words in $A$ and $B$ (including $1$) by $N$. If $s=s_1s_s\dots s_n\in N$, an \emph{initial subword} of $s$ is one of the form $s_1s_2\dots s_k$ for some $0\le k\le n$, where by convention this product is $1$ if $k=0$. A \emph{tree} is a finite subset of $N$ that is closed under taking initial subwords. If $T$ is a tree, we define $\tb=T\cup TA \cup TB$, where $TA=\{tA:t\in T\}$ and $TB=\{tB:t\in T\}$, and $\pt=\tb\smallsetminus T$. For example, if $T=\{1,A,B,AB,BA,BB\}$, then $\tb=\{1,A,AA,AB,ABA,ABB,B,BA,BAA,BAB,BBA,BBB\}$ and $\pt=\{AA,ABA,ABB,BAA,BAB,BBA,BBB\}$.

   Each tree corresponds to an ordered binary tree, and conversely. From basic and well-known properties of such trees we know that $|\tb|=2|T|+1$ and that $|\pt|=|T|+1$.

   Let $T$ be a tree and $\om\in\{0,1\}^{\pt}$. Define the cylinder set $\xxx \subset Y$ by $\xxx $ $\fhat$
   \begin{displaymath}
      \xxx=\{d\in Y: d_t=2 \text{\quad for every $t\in T$ and $d_s=\om_s$ for every $s\in\pt$}\}.
   \end{displaymath}
   By convention, we allow $T=\emptyset$ and in this case define $\tb=\{1_\G\}$ and $\pt=\{1_\G\}$. Observe that if $d\in\xxx$, then the reduction process resulting in $\rho(d+\del)$ will halt after finitely many steps, alter only the coordinates of $d$ within $\tb$, and have value
   \begin{displaymath}
      \rho(d+\del)_s=
      \begin{cases}
         0         &  \text{if $s\in T$}, \\
         d_s+1     &  \text{if $s\in\pt$}, \\
         d_s       &  \text{if $s\notin\tb$}.
      \end{cases}
   \end{displaymath}
   Clearly $\tau$ is one-to-one on $\xxx$, and $ \nu\bigl(\tau(\xxx)\bigr)=\nu(\xxx)=(1/3)^{|\tb|}$.

   The collection $\{\xxx:\text{$T$ is a tree and\ } \om\in\{0,1\}^{\pt} \}$ is pairwise disjoint, and the images of these sets under $\tau$ are also pairwise disjoint. It is known that
   \begin{displaymath}
      |\{T:|T|=n\}|= C_n=\frac{1}{n+1}\binom{2n}{n},
   \end{displaymath}
   where $C_n$ is the $n$th Catalan number, and that $\displaystyle \sum_{n=0}^\infty C_nu^n= \frac{2} {1+\sqrt{1-4u}}$. Hence
   \begin{displaymath}
     \sum_{T}\! \sum_{\om\in\{0,1\}^{\pt}} \!\nu(\xxx) = \sum_T 2^{|\pt|}\Bigl(\frac13\Bigr)^{|\tb|}
     =\sum_{n=0}^\infty C_n 2^{n+1}\Bigl(\frac13\Bigr)^{2n+1} = \frac23 \sum_{n=0}^\infty C_n
     \Bigl(\frac29\Bigr)^n = 1,
   \end{displaymath}
   proving that $\tau$ is well-defined and one-to-one off a $\nu$-null set, and that $\tau_*\nu=\nu$. The reduction process $\rho$ does not affect the image under $\phi$, and so $\phi(\tau(d))=\phi(\rho(d+\delta))=\phi(d+\delta)=\phi(d)+\xdel$.
\end{proof} 

\begin{proof}[Proof of Theorem \ref{thm:main}]
   If $\tau\colon Y\to Y$ is the map defined in Proposition \ref{prop:addition}, then for every $s\in\FF$ we have that $(\sig_s\circ \tau\circ\sig_s^{-1})(d)=\rho(d+s\cdot\del)$. Since both $\sig_s$ and $\tau$ preserve ~$\nu$, and since
   \begin{displaymath}
      \phi\bigl( (\sig_s^{}\circ\tau\circ\sig_s^{-1})(d)\bigr)=\phi\bigl(\rho(d+s\cdot\del)
      \bigr) = \phi(d+s\cdot\del)=\phi(d)+s\cdot\xdel,
   \end{displaymath}
   it follows that for every $s\in\FF$ and every Borel set $E\subset X_f $ we have that
   \begin{align*}
      (\phi_*\nu)(E + s\cdot \xdel) &= \nu(\phi^{-1}(E+s\cdot\xdel)) =
      \nu((\sigma_s\circ\tau\circ\sigma_s^{-1})(\phi^{-1}(E)) \\
       &= \nu(\phi^{-1}E) =
      (\phi_*\nu)(E).
   \end{align*}
   Hence $\phs\nu$ is invariant under translation by all integral combinations of shifts of $\xdel$, i.e., under translation by all elements in $\Df$. Then density of $\Df$ implies that $\phs\nu$ is translation-invariant, and so $\phs\nu=\mf$.
\end{proof}

\section{Generalizations}\label{sec:generalizations}

In this section we generalize Theorem~\ref{thm:main} to a larger class of acting groups and also to more principal actions.

A countable group $\G$ is called \emph{indicable} if there is a surjective homomorphism $[\,\cdot\,]\colon \G \to \ZZ$. Many of our earlier arguments extend to indicable acting groups. An exception is the combinatorial proof of Proposition \ref{prop:addition}, from which we deduced that $\phs\nu=\mu_f$. However, we can substitute an analytic alternative due to Hayes \cite{HayesMax}*{Thm.\ 3.6} which is both more general and in addition establishes the surjectivity of the homoclinic map without resorting to the reduction process in Lemma \ref{lem:3-cover}.

We can also adapt our arguments to elements of the form $M-a-b\in\ZZ\G$ where $M\ge3$. The main issue here is extending Proposition \ref{prop:one-to-one} to prove injectivity. 
One might expect that the shift of finite type within $\{-M+1,\dots,M-1\}^\G$ analogous to $\Sigma'\subset \{-1,0,1\}^\F$ would be significantly more complicated. However, it turns out that these coincide for all $M\ge3$, and so the argument can be applied almost verbatim.

\begin{theorem}\label{thm:generalization}
   Let $\G$ be a countable group equipped with a homomorphism $[\,\cdot\,]\colon \G\to\ZZ$. Suppose that $a$ and $b$ are distinct elements of $\G$ with $[a]=[b]=1$, and that $M\ge3$ is an integer. Let $f = M-a-b\in\ZZ\G$. Then the principal algebraic $\G$-action $(X_f,\af)$ is measurably isomorphic to the Bernoulli $\G$-action on $\{0,1,\dots,M-1\}^\G$ with the uniform base probability measure.
\end{theorem}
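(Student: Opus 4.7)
The plan is to follow the proof of Theorem~\ref{thm:main} step by step, replacing the two free-group-specific ingredients, Lemma~\ref{lem:3-cover} and Proposition~\ref{prop:addition}, by the single analytic theorem \cite{HayesMax}*{Thm.~3.6} as advertised in the opening of this section. First, I would set up the homoclinic map. Since $\|a^{-1}+b^{-1}\|_{\ell^1(\G)}=2<M$, the element $f^{*}=M-a^{-1}-b^{-1}$ is invertible in $\ell^1(\G)$ with nonnegative inverse
\[
   \wdel=\frac{1}{M}\sum_{n\ge 0}\Bigl(\frac{a^{-1}+b^{-1}}{M}\Bigr)^{n},
\]
and $\|\wdel\|_{1}=1/(M-2)$. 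The map $\phi(d)=\pi(d\cdot\wdel)$ is then continuous and $\G$-equivariant from $\ell^\infty(\G,\ZZ)$ into $X_f$, and we restrict $\phi$ to the Bernoulli space $Y=\{0,1,\dots,M-1\}^{\G}$ with uniform product measure $\nu$.

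Second, I would extend Proposition~\ref{prop:one-to-one} to show $\phi$ is one-to-one off a $\nu$-null set. For distinct $d,e\in Y$ with $\phi(d)=\phi(e)$, put $c=(e-d)\cdot\wdel$, so $c\cdot f^{*}=e-d$. Then
\[
   \|c\|_\infty\le\|e-d\|_\infty\,\|\wdel\|_{1}\le\frac{M-1}{M-2}<2\quad\text{for every }M\ge 3,
\]
so $c\in\{-1,0,1\}^{\G}$ automatically and the preliminary $c_s=\pm 2$ reduction needed in Section~\ref{sec:percolation} disappears. The defining constraint $-M+1\le Mc_s-c_{sa}-c_{sb}\le M-1$ on the alphabet $\{-1,0,1\}$ reduces, when $c_s=1$, to $c_{sa}+c_{sb}\in\{1,2\}$ independently of $M$, producing the same $15$ allowed triples as for $M=3$. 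Running the percolation argument verbatim, each letter $b$ in the constructed word forces $d_{ss_k}=0$ (factor $1/M$), and each letter $a$ restricts $(d_{ss_k},d_{ss_k b})$ to the $M+2$ pairs $\{0\}\times\{0,\dots,M-1\}\cup\{(1,0),(1,1)\}$ (factor $(M+2)/M^{2}$); indicability, together with $[a]=[b]=1$ and $a\ne b$, ensures that $ss_k b$ is distinct from the chain coordinates $ss_j$. Summation gives
\[
   \nu\bigl(E^{(s,n)}\bigr)\le\Bigl(\frac{M+2}{M^{2}}+\frac{1}{M}\Bigr)^{n}=\Bigl(\frac{2(M+1)}{M^{2}}\Bigr)^{n}\to 0,
\]
since $M^{2}-2M-2>0$ for $M\ge 3$. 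The case $c_s=-1$ is symmetric, and a union over $s\in\G$ yields the required $\sigma$-invariant $\nu$-null set.

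Third, I would invoke \cite{HayesMax}*{Thm.~3.6} to establish simultaneously that $\phi$ is surjective and that $\phs\nu=\mf$. This Fourier-analytic input replaces both Lemma~\ref{lem:3-cover}, whose layered reduction exploited the freeness of $\FF$, and Proposition~\ref{prop:addition}, whose ordered-binary-tree bookkeeping is likewise free-group-specific. Combined with the injectivity from step two, this produces the claimed measurable isomorphism between $(X_f,\af,\mf)$ and the Bernoulli $\G$-action on $\{0,\dots,M-1\}^{\G}$. The main anticipated obstacle is verifying the hypotheses of \cite{HayesMax}*{Thm.~3.6} for the present $f$: one must check that $\af$ is expansive (which follows from the $\ell^1$-invertibility of $f^{*}$) and that the summability/decay conditions on $\wdel$ used in \cite{HayesMax} hold here (the exponential decay of $\wdel$ along $[\,\cdot\,]$-levels, together with $\|\wdel\|_1=1/(M-2)$, should be more than enough).
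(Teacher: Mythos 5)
Your plan follows the same overall route as the paper: set up the homoclinic map from $\wdel=(f^{*})^{-1}$, transfer the percolation argument of Proposition~\ref{prop:one-to-one}, and replace the free-group combinatorics of Lemma~\ref{lem:3-cover} and Proposition~\ref{prop:addition} by a Fourier-analytic computation of $\phs\nu$. However, there is a concrete arithmetic slip at the key reduction step.

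You assert
\begin{displaymath}
   \|c\|_\infty\le\frac{M-1}{M-2}<2\quad\text{for every }M\ge3,
\end{displaymath}
and conclude that $c\in\{-1,0,1\}^{\G}$ ``automatically'' so that the preliminary $c_s=\pm2$ analysis from Section~\ref{sec:percolation} disappears. This is false for $M=3$, where $(M-1)/(M-2)=2$ exactly. Indeed, for $M=3$ the bound $\|e-d\|_\infty\|\wdel\|_1\le 2$ is attained (take $e\equiv M-1$, $d\equiv0$), so $c_s=\pm 2$ genuinely occurs and must be handled. The paper is careful here: it restricts the inequality to $M\ge4$ and, for $M=3$, falls back on the full argument of Proposition~\ref{prop:one-to-one}, whose treatment of $c_s=\pm2$ (showing $d_{sa^n}=0$ for all $n\ge0$, hence a null event) transfers to a general indicable $\G$ because $[sa^n]=[s]+n$ guarantees the coordinates $sa^n$ are distinct. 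You should add that step for $M=3$; the rest of your injectivity argument, including the count of $M+2$ admissible pairs, the factor $1/M$ for each letter $b$, and the bound $\bigl(2(M+1)/M^2\bigr)^n\to0$, agrees with the paper.

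One further remark: you invoke Hayes's Theorem~3.6 as a black box for both surjectivity and $\phs\nu=\mf$. The paper instead gives a self-contained argument (Proposition~\ref{prop:fourier} and Lemma~\ref{lem:rational}): it computes $\widehat{\phs\nu}(g)=\prod_{s}\widehat{\nu}_0\bigl((g/f)_s\bigr)$ directly and then, using $[a]=[b]=1$ together with the vanishing of $(1/f)_s$ on the set $\{[s]<0\}$, shows that for $g\notin\ZG f$ the coordinate of $g/f$ at the $[\,\cdot\,]$-minimal level lies in $\{1/M,\dots,(M-1)/M\}\pmod1$, which kills a factor in the product. This is exactly where the hypotheses $[a]=[b]=1$ and $M\ge3$ enter the measure identification, and it also yields surjectivity immediately since $\mu_f$ has full support. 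Your outsourcing of this to Hayes is legitimate and is in fact what the paper advertises at the start of the section, but you should be aware that the paper supplies this rationality lemma as the substantive content behind the citation.
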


We start by describing some routine extensions needed. As before, put $f^*=M-a^{-1}-b^{-1}$, and let $\wdel=(f^*)^{-1}\in\ell^1(\G,\RR)$. Then $\wdel_s\ge 0$ for every $s\in\G$, and $\|\wdel\|_1=1/(M-2)$. Let $\xdel=\pi(\wdel)\in\TT^{\G}$.

Let $Y_0=\{0,1,\dots,M-1\}$ and $\nu_0$ be the uniform probability measure on $Y_0$. The Fourier transform of $\nu_0$ is given by $\widehat{\nu}_0(\xi)=(1/M)\sum_{k=0}^{M-1} e^{2\pi i k \xi}$. Let $Y=Y_0^{\G}$ and $\nu=\nu_0^{\otimes\G}$ be product measure on $Y$. The homoclinic map $\phi\colon Y \to X_f$ defined by $\phi(d)=d\cdot \xdel = d\cdot \pi(\wdel)$ is continuous. 

Since $f^{-1}\in\ell^1(\G,\RR)$, again by geometric series, it follows that $g\cdot f^{-1} \in \ell^1(\G,\RR)$ for every $g\in\ZZ\G$. We abbreviate $g\cdot f^{-1}$ to $g/f$. We caution the reader that points of the form $\pi(g/f)$ are \emph{not} in $X_f$, but rather in $X_{f^*}$, which can be rather different. 

The following is a special case of a result by Hayes \cite{HayesMax}*{Thm. 3.6}. Our case relatively easy, and for the convenience of the reader we give a direct proof. 

\begin{proposition}\label{prop:fourier}
   With the above notations, let $\mu=\phs\nu$, considered as a measure on $\TT^{\G}$. For every $g\in \ZZ\G$ we have that
   \begin{displaymath}
      \widehat{\mu}(g)=\prod_{s\in\G} \widehat{\nu}_0\bigl( (g/f)_s\bigr),
   \end{displaymath}
   and this product is absolutely convergent.
\end{proposition}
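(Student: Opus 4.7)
The plan is to compute $\widehat{\mu}(g)=\int_Y e^{2\pi i\<\phi(d),g\>}\,d\nu(d)$ directly and factor it using the independence of the coordinates of $d$ under the product measure $\nu=\nu_0^{\otimes\G}$. The first step is the algebraic identity
\begin{equation*}
\<\phi(d),g\>\equiv \sum_{t\in\G} d_t\,(g/f)_t\pmod 1.
\end{equation*}
Starting from $\<x,g\>=(x\cdot g^*)_{1_\G}$ and $\phi(d)=\pi(d\cdot\wdel)$, the crux is the identity $\wdel\cdot g^*=(g/f)^*$, which follows from $\wdel=(f^*)^{-1}=(f^{-1})^*$ together with the antiautomorphism rule $(h_1h_2)^*=h_2^*\,h_1^*$. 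Reading off the $1_\G$-coordinate of $d\cdot(g/f)^*$ then yields the claim, and since $g/f\in\ell^1(\G,\RR)$ and $0\le d_t\le M-1$, the sum converges absolutely and uniformly in $d\in Y$ with bound $(M-1)\|g/f\|_1$.

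Next, for any finite $F\subset\G$, the coordinate independence of $\nu$ gives
\begin{equation*}
\int_Y e^{2\pi i\sum_{t\in F} d_t\,(g/f)_t}\,d\nu(d)=\prod_{t\in F}\widehat{\nu}_0\bigl((g/f)_t\bigr).
\end{equation*}
As $F$ exhausts $\G$, the partial sums $\sum_{t\in F} d_t(g/f)_t$ converge, uniformly in $d$, to $\sum_{t\in\G}d_t(g/f)_t$. Thus the integrands converge pointwise to $e^{2\pi i\<\phi(d),g\>}$ and are bounded in modulus by $1$, so dominated convergence yields $\widehat{\mu}(g)=\lim_F\prod_{t\in F}\widehat{\nu}_0\bigl((g/f)_t\bigr)$.

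Finally, for absolute convergence of the infinite product, I would use that $\widehat{\nu}_0$ is a trigonometric polynomial with $\widehat{\nu}_0(0)=1$, so $|\widehat{\nu}_0(\xi)-1|=O(|\xi|)$ near the origin. Combined with $g/f\in\ell^1(\G,\RR)$, this gives $\sum_{t\in\G}\bigl|\widehat{\nu}_0((g/f)_t)-1\bigr|<\infty$, which is the standard criterion ensuring the infinite product converges absolutely (and hence unconditionally). One minor subtlety is the legitimacy of writing $g/f$ at all: invertibility of $f$ in $\ell^1(\G,\RR)$ follows either by applying the involution to $(f^*)^{-1}$ or directly from the same geometric-series argument used for $f^*$, since $M\ge 3$ ensures $\|(a+b)/M\|_1<1$. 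I do not expect a deep obstacle in this proof; the only real work is the careful bookkeeping needed to interchange the infinite sum over $\G$ with integration over $Y$, and that is handled cleanly by the uniform $\ell^1$ bound established in the first step.
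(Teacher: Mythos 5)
Your proof is correct and follows essentially the same route as the paper: reduce $\<\phi(d),g\>$ to $\sum_t d_t\,(g/f)_t$, factor the exponential, and use the independence of the coordinates of $\nu$ to obtain the product of $\widehat{\nu}_0$ values. The only minor differences are cosmetic improvements in rigor — you use the involution identity $\wdel\cdot g^*=(g/f)^*$ where the paper does a direct index computation, and you justify the interchange of the infinite product over $\G$ with the integral via dominated convergence, which the paper leaves implicit.
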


\begin{proof}
   By definition,
   \begin{displaymath}
      \widehat{\mu}(g)=\int_{\TT^\G} e^{-2\pi i \<\xi,g\>}\,d\mu(\xi)
      =\int_Y e^{-2 \pi i \< \phi(d),g\>}\, d\nu(d).
   \end{displaymath}
   In order to determine the exponent, note that $(\wdel)^*=1/f$ and so
   \begin{align*}
   	\bigl( (d\cdot \wdel)\cdot  g^*\bigr)_{1_\G} &=\sum_{s\in\G} d_s \Bigl( 
   	\sum_{tu=s^{-1}} \wdel_t\csp g_u^* \Bigr) \\
   	&= \sum_{s\in\G} \Bigl( \sum_{u^{-1}t^{-1}=s} g_{u^{-1}}^{}(\wdel)^*_{t^{-1}}\Bigr) d_s = \sum_{s\in\G} (g/f)_s\csp d_s.
  	\end{align*}
  	Hence
  	\begin{align*}
  		\exp\bigl[ -2\pi i & \<\phi(d),g\>\bigr] = \exp\bigl[ -2\pi i \bigl((d\cdot\wdel)\cdot g^*\bigr)_{1_\G} \bigr]\\
  		&= \exp\bigl[-2\pi i\sum_{s\in\G} (g/f)_s\,d_s\bigr] =
  		\prod_{s\in\G} \exp\bigl[-2\pi i (g/f)_s\,d_s\bigr] .
  	\end{align*}
  	Thus
  	\begin{align*}
  	  \widehat{\mu}(g) & = \int_Y \prod_{s\in\G} e^{-2\pi i (g/f)_s d_s}\,d\nu_0^{\otimes\G} (d)\\
  	  & = \prod_{s\in\G} \int_{Y_0} e^{-2\pi i (g/f)_s d_s} \,d\nu_0(d_s)
  	  =\prod_{s\in\G} \widehat{\nu}_0\bigl( (g/f)_s\bigr).
  	\end{align*}
  	Since $\nuhat_0$ is smooth with $\widehat{\nu}_0(0)=1$, and since  $g/f\in\ell^1(\G,\RR)$, the last product is clearly absolutely convergent.
\end{proof}

The preceding result is valid in great generality, for example for all polynomials with a summable inverse. However, for our purposes we need more information about the coordinates of $g/f$.

\begin{lemma}\label{lem:rational}
   Under the hypotheses of Theorem \ref{thm:generalization}, for every $g\in\ZZ\G\setminus \ZZ\G f$ there is an $s\in\G$ such that $\pi\bigl( (g/f)_s\bigr) = k/M$ for some $1\le k\le M-1$.
\end{lemma}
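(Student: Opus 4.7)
The plan is to analyze the unique element $h:=g\cdot f^{-1}\in\ell^1(\G,\RR)$ satisfying $h\cdot f=g$, and to show that if $g\notin\ZZ\G f$ then some coordinate $h_s$ has denominator exactly $M$, which is precisely the condition $\pi(h_s)\in\{1/M,\ldots,(M-1)/M\}$.

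First I would extract two structural facts from $f^{-1}=\sum_{n\ge 0}M^{-n-1}(a+b)^n$. Because $[a]=[b]=1$, the element $(a+b)^n\in\ZZ\G$ is supported in the level set $\{u:[u]=n\}$, so $(f^{-1})_u$ vanishes for $[u]<0$ and lies in $M^{-[u]-1}\ZZ$ otherwise. Since $g$ has finite support, it follows that $h_s\in\ZZ[1/M]$ for every $s$, and that $h_s=0$ whenever $[s]<T_{\min}:=\min\{[t]:g_t\ne 0\}$; here $T_{\min}$ is defined because $g\ne 0$ (if $g=0$ then $g\in\ZZ\G f$ trivially). Evaluating $h\cdot f=g$ coordinatewise also yields the recurrence
\[
M h_s \;=\; h_{sa^{-1}}+h_{sb^{-1}}+g_s.
\]

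The main tool is the function $W\colon\ZZ[1/M]\to\ZZ_{\ge 0}$ defined by $W(x)=\min\{K\ge 0:M^Kx\in\ZZ\}$, together with three easily checked properties: (i) $W(x)=0$ iff $x\in\ZZ$, and $W(x)=1$ iff $\pi(x)\in\{1/M,\ldots,(M-1)/M\}$; (ii) $W(Mx)=\max(W(x)-1,0)$; and (iii) $W(x+y)\le\max(W(x),W(y))$, with equality whenever $W(x)\ne W(y)$. Although $W$ is not literally an $M$-adic valuation when $M$ is composite, these relaxed properties are all that the descent requires.

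The proof now runs by contradiction. Assume no $s$ satisfies $W(h_s)=1$. If $W(h_s)=0$ for every $s$, then $h$ is $\ZZ$-valued and in $\ell^1$, hence has finite support, so $h\in\ZZ\G$ and $g=h\cdot f\in\ZZ\G f$, contradicting the hypothesis. Otherwise pick $s_0$ with $W(h_{s_0})\ge 2$. Using the recurrence and property (iii), together with $W(g_{s_0})=0\ne W(Mh_{s_0})=W(h_{s_0})-1\ge 1$, I obtain $W(h_{s_0a^{-1}}+h_{s_0b^{-1}})=W(h_{s_0})-1\ge 1$; hence $\max(W(h_{s_0a^{-1}}),W(h_{s_0b^{-1}}))\ge 1$, and since no $W$-value equals $1$ this max is actually $\ge 2$. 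Choosing such a neighbor as $s_1$ gives $[s_1]=[s_0]-1$ and $W(h_{s_1})\ge 2$; iterating builds $s_k$ with $[s_k]=[s_0]-k$ and $W(h_{s_k})\ge 2$. Eventually $[s_k]<T_{\min}$, forcing $h_{s_k}=0$ and $W(h_{s_k})=0$, the desired contradiction. The one subtle point is the possible compositeness of $M$: the usual $p$-adic valuation does not isolate the target condition $\pi(h_s)=k/M$ (as opposed to $k/M^2$, etc.), which is precisely why the pseudo-valuation $W$ with properties (i)--(iii) is the right substitute.
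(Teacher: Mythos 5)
Your proof is correct, but it takes a more elaborate route than the paper's, and the extra machinery is not actually needed. The two arguments share the same structural core: $(g/f)_s$ vanishes for $[s]$ small enough, and coordinates of $g/f$ satisfy the three--term relation coming from $(g/f)\cdot f = g$. The paper, however, reduces modulo $1$ at the outset. Since $g\notin\ZZ\G f$ forces $\pi(g/f)\neq 0$, the set $\{[s]:\pi\bigl((g/f)_s\bigr)\neq 0\}$ is nonempty and bounded below, so one may simply pick $s_0$ attaining the minimum. Because $\pi(g)=0$, the three--term relation mod $1$ reads $M\,\pi\bigl((g/f)_{s_0}\bigr) = \pi\bigl((g/f)_{s_0a^{-1}}\bigr)+\pi\bigl((g/f)_{s_0b^{-1}}\bigr)$, and the right side vanishes by minimality of $[s_0]$; thus $M\,\pi\bigl((g/f)_{s_0}\bigr)=0$, i.e.\ $\pi\bigl((g/f)_{s_0}\bigr)=k/M$ for some $1\le k\le M-1$. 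That is the whole proof. By contrast you stay in $\ZZ[1/M]$, introduce the pseudo--valuation $W$, and run a contradiction argument with a descent from a $W\geq 2$ coordinate. Everything you assert is correct --- properties (i)--(iii) of $W$ check out, and the descent terminates because of the lower bound $T_{\min}$ --- but the apparatus is unnecessary: the subtlety about composite $M$ that motivates $W$ evaporates once you observe that $M\,\pi\bigl((g/f)_{s_0}\bigr)=0$ directly characterizes the target set $\{1/M,\dots,(M-1)/M\}$ regardless of whether $M$ is prime. Your descent is essentially a disguised version of "pick the minimal $[s_0]$": running the contrapositive produces the same extremal coordinate, just less directly.
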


\begin{proof}
   If $g\notin \ZZ\G f$ then $\pi(g/f)\neq 0$ in $\TT^{\G}$. Since $(1/f)_s=0$ for every $s\in\G$ with $[s]<0$, it follows that $\{[s]:\pi((g/f))_s\neq0\}$ is bounded below. Choose $s_0$ that attains this minimum. Since $\pi(g/f)\cdot f=\pi((g/f)\cdot f) =\pi (g)=0$, we obtain that
   \begin{displaymath}
      M \pi(g/f)_{s_0} - \pi(g/f)_{s_0a^{-1}} - \pi(g/f)_{s_0b^{-1}} = 0.
   \end{displaymath}
   But the second and third terms vanish by minimality of $[s_0]$, showing that $\pi(g/f)_{s_0}$ has the required form.
\end{proof}

\begin{proof}[Proof of Theorem \ref{thm:generalization}]
   First observe that $\nuhat_0^{}(k/M)=0$ for $1\le k\le M-1$, while $\nuhat_0|_{\ZZ}\equiv 1$.  Furthermore, $\muhat(g) = \prod_{s\in\G} \nuhat_0\bigl( (g/f)_s\bigr)$ for every $g\in\ZG$ by Proposition ~\ref{prop:fourier} and Lemma~ \ref{lem:rational}. If $g\in\ZG f$, then $g/f$ has integral coordinates and hence $\muhat(g)=1$. If $g\in\ZG\setminus\ZG f$, then there is an $s$ for which $\pi((g/f)_s)=k/M$ for some $1\le k\le M-1$. Then $\nuhat_0((g/f)_s)=0$ and so $\muhat(g)=0$. Hence $\muhat$ and $\muhat_f$ both equal the indicator function of $\ZG f$, and so $\mu=\mu_f$.
   
   Since $\phi$ is continuous and $\phs\nu=\mu_f$ has full support, it follows that $\phi$ is surjective.
   
   Finally, consider the proof for injectivity in Proposition \ref{prop:one-to-one}. Suppose that $e,d\in Y$ with $\phi(e)=\phi(d)$. Then $e\cdot \wdel- d\cdot\wdel=c\in\ell^{\infty}(\G,\ZZ)$. But
   \begin{displaymath}
      \|e\cdot\wdel-d\cdot\wdel\|_{\infty} \le \|e-d\|_{\infty}\|\wdel\|_1
      \le \frac{M-1}{M-2}.
   \end{displaymath}
   If $M\ge4$, then $(M-1)/(M-2)<2$, and so $-1\le c\le 1$. Furthermore, the allowed blocks in defining the shift of finite type $\Sigma$ over $\FF$ are exactly those in the proof that remain after the possibility that $c_s=2$ or $c_s=-2$ have been dealt with. Explicitly, $\{(k,l,m)\in\{-1,0,1\}^3: -M+1\le 3k-l-m\le M-1\}$ is the same set of 15 patterns. The proof now proceeds as before, with the bound of $(8/9)^n$ replaced by $[(2M+2)/M^2]^n$.
\end{proof}

\section{Remarks and questions}\label{sec:remarks}

Suppose that $\G$ is a countable group and that $[\,\cdot\,]\colon\G\to\ZZ$ is a homomorphism. Let $f\in\ZG$ have the form $f=M-\sum_{s\in I} f_s\csp s$, where both $[s]\ge1$ and $f_s>0$ hold for every $s\in I$, and where $M>\sum_{s\in I}f_s$. Using the same notation as in \S\ref{sec:generalizations}, we see that $\wdel=(f^*)^{-1}\in\ell^1(\G,\RR)$ defines the continuous homoclinic map $\phi\colon Y\to X_f$. Proposition ~\ref{prop:fourier} and Lemma ~\ref{lem:rational} easily extend to show that $\phi$ is surjective and maps the Bernoulli measure $\nuhat_0^{\otimes \G}$ to Haar measure ~$\mu_f$. What is not obvious is whether $\phi$ is essentially one-to-one, although this seems to us likely.

\begin{conjecture}\label{conj:bernoulli}
   Let $\G$ be a countable group and $[\,\cdot\,]\colon\G\to\ZZ$ be a homomorphism. Suppose that $f=M-\sum_{s\in I} f_s\csp s$, where $M>\sum_{s\in I}f_s$ and  $[s]\ge1$ and $f_s>0$ for every $s\in I$.  Then the homoclinic map $\phi\colon Y\to X_f$ is a measurable isomorphism.
\end{conjecture}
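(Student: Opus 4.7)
The conjecture asks for a measurable isomorphism, of which two of the three required features are nearly automatic. As the opening paragraph of \S\ref{sec:remarks} indicates, surjectivity of $\phi$ and the identity $\phs\nu=\mu_f$ carry over from Theorem~\ref{thm:generalization} with only cosmetic changes: $\wdel=(f^*)^{-1}$ still lies in $\ell^1(\G,\RR)$ by a geometric series expansion (using $M>L:=\sum_{s\in I}f_s$), Hayes's Fourier identity $\muhat(g)=\prod_{s\in\G}\nuhat_0((g/f)_s)$ from Proposition~\ref{prop:fourier} is unchanged, and the minimality argument of Lemma~\ref{lem:rational} still produces a coordinate of $g/f$ whose image in $\TT$ is a nonzero multiple of $1/M$ (positivity of the coefficients $f_s$ prevents the leading term from cancelling). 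Thus the only real content in the conjecture is essential injectivity of $\phi$, which I would establish by adapting Proposition~\ref{prop:one-to-one}.

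Suppose $e,d\in Y$ satisfy $\phi(e)=\phi(d)$, and set $c=(e-d)\cdot\wdel$. Then $\pi(c)=0$ forces $c\in\ell^\infty(\G,\ZZ)$; the norm bound $\|c\|_\infty\le(M-1)/(M-L)$ places $c$ in $\{-K,\dots,K\}^\G$ with $K:=\lfloor(M-1)/(M-L)\rfloor$; and the identity $c\cdot f^*=e-d$ cuts out a shift of finite type $\Sigma\subset\{-K,\dots,K\}^\G$ defined by $-M+1\le Mc_s-\sum_{t\in I}f_t c_{st}\le M-1$ for every $s\in\G$. The first step is to peel off extreme values of $|c_s|$ as in Proposition~\ref{prop:one-to-one}. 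If $c_s=K$ then the local inequality together with $c_{st}\le K$ forces each $c_{st}$ to lie at or very near $K$; iterating along any word $t_1 t_2\cdots$ in $I$ produces an infinite set of distinct elements (distinct because $[\,\cdot\,]$ is strictly increasing along such a word) on which $c$, hence $e-d=c\cdot f^*$, is rigidly determined, confining $d$ to a $\nu$-null product set. The same argument handles $c_s=-K$, then $c_s=\pm(K-1)$, and so on, eventually leaving only configurations $c$ in a residual shift of finite type $\Sigma'\subset\{-1,0,1\}^\G$.

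On $\Sigma'$ I would run the walk from Proposition~\ref{prop:one-to-one}. Given $s$ with $c_s=1$, at step $k$ I pick $t_{k+1}\in I$ according to the local pattern at $ss_k$, where $s_k:=t_1\cdots t_k$. Since $Mc_{ss_k}-\sum_{t\in I}f_t c_{ss_k t}=e_{ss_k}-d_{ss_k}$ is a nonzero integer at each step, $(e_{ss_k},d_{ss_k})$ is confined to a proper subset of $Y_0\times Y_0$, and in many local patterns the constraint further restricts $d$ at one of the yet-untouched neighbors $ss_k t$. Strict monotonicity of $[\,\cdot\,]$ along words in $I$ guarantees that all visited coordinates are distinct, so the per-step mass bounds multiply independently. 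Summing over all length-$n$ choices $(t_1,\dots,t_n)$ via a multinomial expansion gives a total $\nu$-mass of the form $\lambda^n$, where $\lambda=\sum_{t\in I} p_t$ and $p_t$ is the per-step probability associated with selecting $t$. If $\lambda<1$, letting $n\to\infty$ and then unioning over $s\in\G$ produces the desired $\nu$-null set outside of which $\phi$ is injective.

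The main obstacle is verifying $\lambda<1$ in the general case. For $f=M-a-b$ the branching is $2$ and the direct tally of allowed local patterns gives $\lambda=(2M+2)/M^2$, which just beats $1$ for $M\ge 3$; for general $I$ and weights $f_s$ the number of allowed patterns at each step grows quickly, and a uniform analysis replacing the ad hoc enumeration of Proposition~\ref{prop:one-to-one} is needed. I expect the gap $M-L>0$ provides enough slack to push this through---perhaps via a volume-counting estimate showing that the number of integer triples $(c_{ss_k},(c_{ss_k t})_{t\in I})$ compatible with the local constraint is of order $(M-L)^{|I|}$ rather than $M^{|I|}$---but a careful combinatorial analysis, including correctly accounting for local patterns that simultaneously constrain $d$ at several new coordinates at once, is where the real work of the conjecture lies.
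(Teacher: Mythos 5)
This statement is labeled as a \emph{conjecture} in the paper, and the paper does not prove it. The authors' own remarks preceding the conjecture make exactly the same observation you do: $\wdel=(f^*)^{-1}$ lies in $\ell^1(\G,\RR)$, Proposition~\ref{prop:fourier} and Lemma~\ref{lem:rational} extend with only cosmetic changes to give surjectivity of $\phi$ and $\phs\nu=\mu_f$, and what remains open is essential injectivity. Your proposal mirrors this assessment faithfully, and you are honest that the crucial step is unproven: you end by conceding that verifying $\lambda<1$ in general, and correctly accounting for local patterns that constrain $d$ at several new coordinates at once, ``is where the real work of the conjecture lies.''

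So the proposal is a strategy sketch, not a proof, and it should not be presented as a proof of the statement. Two concrete places where the sketch is genuinely incomplete, beyond your flagged $\lambda<1$ estimate: (i) the ``peeling off extreme values'' step is asserted but not carried out --- in Proposition~\ref{prop:one-to-one} the argument for $c_s=2$ relies on the single allowed pattern $(2,2,2)$ propagating along $a$-powers, and when $K=\lfloor(M-1)/(M-L)\rfloor$ exceeds $2$ or when $|I|>2$ with unequal weights $f_t$, there may be many allowed extreme patterns and no single forced propagation path, so it is not clear that one can reduce to a shift of finite type in $\{-1,0,1\}^\G$ at all; (ii) when $|I|>2$ the per-step constraint on $d$ may involve coordinates $ss_k t$ for several $t\in I$ simultaneously, and these coordinates are not visited by the single walk $s_k=t_1\cdots t_k$, so the claimed independence of the per-step mass bounds (which rests on distinctness of the visited coordinates) needs a more careful bookkeeping than the binary-tree case. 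Your instinct about where the difficulty lies is correct and matches the paper; but as things stand you have reproduced the paper's motivation for stating this as a conjecture rather than closing the gap.
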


The condition that the large coefficient of $f$ occur at an extreme coordinate with respect to $[\,\cdot\,]$ is certainly necessary. Take for example $\G=\ZZ=\< a\>$ and $f = 3 - a - a^{-1}$. Then $\af$ has entropy $\log [(3+\sqrt{5})/2] <\log 3$. Here Lemma \ref{lem:rational} breaks down, since the coordinates of $1/f$ are irrational. Indeed, it is even possible for these coordinates to be transcendental \cite{LindSchmidtHomoclinic}*{Example 5.8}. Einsiedler and the second author constructed an explicit sofic shift and a continuous map from it to ~$X_f$ that is essentially one-to-one \cite{EinsiedlerSchmidt}*{Example 4.1}. The extent to which algebraic actions have such ``good'' symbolic covers has been extensively studied for $\G=\ZZ^d$, and now presents new possibilities for general ~$\G$.

As observed in \cite{EinsiedlerSchmidt}*{Cor.\ 5.1} in the case $\G=\ZZ^2$, by varying $[\,\cdot\,]$ we can conclude that each of the four elements $3-a^{\pm1}-b^{\pm1}$ give an algebraic $\G$-action isomorphic to the same Bernoulli $\G$-action, and hence are isomorphic to each other. However, changing the coefficient signs can seriously impede our analysis. For instance, using the notation in Conjecture \ref{conj:bernoulli}, does $f = 3 + a + b$ define an algebraic $\G$-action that is Bernoulli? 

An element $f\in\ZG$ is \emph{lopsided} if there is an $s_0\in\G$ such that $|f_{s_0}| > \sum_{s\neq s_0}|f_s|$. For an arbitrary countable group $\G$, is every principal algebraic $\G$-action defined by a lopsided polynomial measurably isomorphic to a Bernoulli $\G$-action? Hayes ~\cite{HayesHarmonic} showed that every such action is a factor of a Bernoulli action under some mild orderability assumptions on $\G$. It follows that if $\G$ is amenable, then the action itself is Bernoulli by the results of Ornstein and Weiss \cite{Ornstein-Weiss}. However, to our knowledge this remains open for nonamenable groups, and even for free groups.

Next, consider the case $\G=\F$ and $f=2-a-b$. If we attempt to mimic earlier constructions, we immediately hit a roadblock that although $1/f^*$ is well-defined, it is no longer in $\ell^1(\F,\RR)$, and so the convolution operator used to define the homoclinic map has no clear meaning. However, here $1/f^* \in\ell^2(\F,\RR)$, and recent work of Hayes \cite{HayesMax} shows that convolution can be extended to square summable elements, using convergence in measure. As a consequence, he obtains a well-defined measurable homoclinic map $\phi\colon\{0,1\}^\F \to X_f$, and his more general version of Proposition \ref{prop:fourier}  shows that $\phi$ maps the Bernoulli measure to Haar measure. However, this leaves open the enticing problem of whether this explicit map is essentially one-to-one.

Finally, let $\G=\F$ and $f = 1+a+b$. Historically, it was the careful computation of the entropy of the commutative version of this example that was the key to unlocking the connection between entropy for algebraic actions and Mahler measure ~\cite{LSW}. There are tantalizing clues that the sofic entropy of $\af$ is not only positive, but has the precise value $\log(3/\sqrt{2})$. Essentially nothing is known about the dynamical properties of $\af$. Is it mixing? Does it have completely positive entropy (with respect to every sofic approximation to $\F$)? Is it measurably isomorphic to a Bernoulli $\F$-action? 

\begin{bibdiv}
\begin{biblist}

\bib{BowenNew}{article}{
   author={Bowen, Lewis},
   title={Measure conjugacy invariants for actions of countable sofic
   groups},
   journal={J. Amer. Math. Soc.},
   volume={23},
   date={2010},
   pages={217--245}
}

\bib{BowenExpansive}{article}{
   author={Bowen, Lewis},
   title={Entropy for expansive algebraic actions of residually finite
   groups},
   journal={Ergod. Th. \& Dynam. Sys.},
   volume={31},
   date={2011},
   pages={703--718},
}

\bib{Bowen}{article}{
   author={Bowen, Lewis},
   title={Examples in the entropy theory of countable groups actions},
   journal={arXiv:1704.06349 [math.DS]},
   date={2017},
}

\bib{ChungLi}{article}{
   author={Chung, Nhan-Phu},
   author={Li, Hanfeng},
   title={Homoclinic groups, IE groups, and expansive algebraic actions},
   journal={Invent. Math.},
   volume={199},
   date={2015},
   number={3},
   pages={805--858}
}	

\bib{Deninger}{article}{
   author={Deninger, Christopher},
   title={Fuglede-Kadison determinants and entropy for actions of discrete
   amenable groups},
   journal={J. Amer. Math. Soc.},
   volume={19},
   date={2006},
   number={3},
   pages={737--758}
}

\bib{DeningerSchmidt}{article}{
   author={Deninger, Christopher},
   author={Schmidt, Klaus},
   title={Expansive algebraic actions of discrete residually finite amenable
   groups and their entropy},
   journal={Ergod. Th. \& Dynam. Sys.},
   volume={27},
   date={2007},
   number={3},
   pages={769--786}
}

\bib{EinsiedlerSchmidt}{article}{
   author={Einsiedler, M.},
   author={Schmidt, K.},
   title={Markov partitions and homoclinic points of algebraic $\zd$-actions},
   journal={Tr. Mat. Inst. Steklova},
   volume={216},
   date={1997},
   number={Din. Sist. i Smezhnye Vopr.},
   pages={265--284},
   translation={
      journal={Proc. Steklov Inst. Math.},
      date={1997},
      number={1(216)},
      pages={259--279},
      issn={0081-5438},
   }
}

\bib{Halmos}{article}{
   author={Halmos, Paul R.},
   title={On automorphisms of compact groups},
   journal={Bull. Amer. Math. Soc.},
   volume={49},
   date={1943},
   pages={619--624},
}

\bib{Hayes}{article}{
   author={Hayes, Ben},
   title={Fuglede-Kadison determinants and sofic entropy},
   journal={Geom. Funct. Anal.},
   volume={26},
   date={2016},
   number={2},
   pages={520--606},
}

\bib{HayesMax}{article}{
   author={Hayes, Ben},
   title={Max-min theorems for weak containment, square summable homoclinic points, and completely positive entropy},
   journal={arXiv:1902:06600v3},
}

\bib{HayesHarmonic}{article}{
   author={Hayes, Ben},
   title={Harmonic models and Bernoullicity},
   journal={arXiv:1904.03528},
}

\bib{KerrCompletelyPositive}{article}{
   author={Kerr, David},
   title={Bernoulli actions of sofic groups have completely positive
   entropy},
   journal={Israel J. Math.},
   volume={202},
   date={2014},
   number={1},
   pages={461--474},
}

\bib{KerrLi}{book}{
   author={Kerr, David},
   author={Li, Hanfeng},
   title={Ergodic theory. Independence and dichotomies},
   series={Springer Monographs in Mathematics},
   publisher={Springer, Cham},
   date={2016},
   pages={xxxiv+431},
}

\bib{KitSch}{article}{
   author={Kitchens, Bruce},
   author={Schmidt, Klaus},
   title={Automorphisms of compact groups},
   journal={Ergod. Th. \& Dynam. Sys.},
   volume={9},
   date={1989},
   number={4},
   pages={691--735}
}

\bib{LindStructure}{article}{
   author={Lind, D. A.},
   title={The structure of skew products with ergodic group automorphisms},
   journal={Israel J. Math.},
   volume={28},
   date={1977},
   number={3},
   pages={205--248}
}

\bib{LSW}{article}{
   author={Lind, Douglas},
   author={Schmidt, Klaus},
   author={Ward, Tom},
   title={Mahler measure and entropy for commuting automorphisms of compact
   groups},
   journal={Invent. Math.},
   volume={101},
   date={1990},
   number={3},
   pages={593--629},
}

\bib{LindSchmidtHomoclinic}{article}{
   author={Lind, Douglas},
   author={Schmidt, Klaus},
   title={Homoclinic points of algebraic $\ZZ^d$-actions},
   journal={J. Amer. Math. Soc.},
   volume={12},
   date={1999},
   number={4},
   pages={953--980},
}

\bib{LS-Heis}{article}{
   author={Lind, Douglas},
   author={Schmidt, Klaus},
   title={A survey of algebraic actions of the discrete Heisenberg group},
   language={Russian, with Russian summary},
   journal={Uspekhi Mat. Nauk},
   volume={70},
   date={2015},
   number={4(424)},
   pages={77--142},
   translation={
      journal={Russian Math. Surveys},
      volume={70},
      date={2015},
      number={4},
      pages={657--714},
      issn={0036-0279},
   }
}

\bib{Miles-Thomas}{article}{
   author={Miles, G.},
   author={Thomas, R. K.},
   title={Generalized torus automorphisms are Bernoullian},
   conference={
      title={Studies in probability and ergodic theory},
   },
   book={
      series={Adv. in Math. Suppl. Stud.},
      volume={2},
      publisher={Academic Press, New York-London},
   },
   date={1978},
   pages={231--249},
}

\bib{Ornstein-Weiss}{article}{
   author={Ornstein, Donald S.},
   author={Weiss, Benjamin},
   title={Entropy and isomorphism theorems for actions of amenable groups},
   journal={J. Analyse Math.},
   volume={48},
   date={1987},
   pages={1--141},
}
	
\bib{Popa}{article}{
   author={Popa, Sorin},
   title={Some computations of 1-cohomology groups and construction of
   non-orbit-equivalent actions},
   journal={J. Inst. Math. Jussieu},
   volume={5},
   date={2006},
   number={2},
   pages={309--332},
}	
	
\bib{Popa-Sasyk}{article}{
   author={Popa, Sorin},
   author={Sasyk, Roman},
   title={On the cohomology of Bernoulli actions},
   journal={Ergod. Th. \& Dynam. Sys.},
   volume={27},
   date={2007},
   number={1},
   pages={241--251},
}	
	
\bib{RudolphSchmidt}{article}{
   author={Rudolph, Daniel J.},
   author={Schmidt, Klaus},
   title={Almost block independence and Bernoullicity of $\ZZ^d$-actions
   by automorphisms of compact abelian groups},
   journal={Invent. Math.},
   volume={120},
   date={1995},
   number={3},
   pages={455--488},
   }

\bib{DSAO}{book}{
   author={Schmidt, Klaus},
   title={Dynamical systems of algebraic origin},
   series={Progress in Mathematics},
   volume={128},
   publisher={Birkh\"auser Verlag, Basel},
   date={1995},
   pages={xviii+310},
}

\end{biblist}
\end{bibdiv}

\end{document}